\newtheorem{proposition}{Proposition}
\newtheorem{theorem}{Theorem}
\newtheorem{lemma}{Lemma}
\newtheorem{remark}{Remark}
\definecolor{darkred}{rgb}{0.9,0.1,0.1}
\definecolor{brrown}{rgb}{0.6,0.3, 0.0}
\title{ Asymptotics of fundamental solutions for time fractional equations with convolution kernels \thanks{The work was supported by SFB1283 of
German Research Council} }
\author{Yu. Kondratiev$ ^{\scriptscriptstyle 1}$,  A. Piatnitski$^{\scriptscriptstyle 2,3}$ and
E. Zhizhina$^{\scriptscriptstyle 3}$\\[5mm]
$^{\scriptscriptstyle 1}${\footnotesize  Department of Mathematics,  University of Bielefeld, 33501 Bielefeld, Germany.}\\[-0.9mm]
{\footnotesize \tt yukondrat@gmail.com}\\[2mm]
$^{\scriptscriptstyle 2}${\footnotesize  The Arctic University of Norway
Campus Narvik, Postbox 385, }\\[-0.9mm]
{\footnotesize  8505 Narvik, \,
Norway. \ \ \ {\tt apiatni@iitp.ru}}\\[2mm]
$^{\scriptscriptstyle 3}${\footnotesize  Institute for Information Transmission
Problems of RAS,
Moscow 127051,
Russia.} \\[-0.9mm]
{\footnotesize   {\tt ejj@iitp.ru}}
}
\date{}
\begin{document}

\maketitle

\noindent
{\bf Keywords:}  convolution type operator, time fractional derivative, large time asumptotics, fundamental solution.  

\begin{abstract}
The paper deals with the  large time asymptotic of the fundamental solution
for  a time fractional evolution equation for a convolution type operator.
In this equation we use a  Caputo time derivative of order $\alpha$ with
$\alpha\in(0,1)$, and assume that the convolution kernel of the spatial
operator is symmetric, integrable and shows a super-exponential decay at
infinity.  Under these assumptions we describe the point-wise asymptotic behavior of the fundamental
solution in all space-time regions.
\end{abstract}


\section{Introduction and  main results}\label{s_int}

A random time change in Markov processes is motivated by several reasons.
First of all, such change will destroy (in general) the Markov property of the
process. The latter is important in the study of biological models where
the Markov dynamics is a quite  rough approximation to realistic behaviour.
Actually, it is one of possible realizations of a general concept of biological times
specific for such models.

In many areas of theoretical and  experimental physics we meet
a notion of sub-diffusion behavior in stochastic dynamics. In particular,
that is true for dynamics in some composite or fractal media. The random time
techniques give a possibility to realize such sub-diffusion asymptotic
in concrete model situations.

And finally, the random time change in Markov processes is
an interesting and reach source of problems inside of stochastic analysis.

The general framework for a random time change  can be described briefly
as the following scheme. Let $\{X_t,t\geq 0; P_x,x\in E\}$ be a strong Markov process in a phase space $E$.
Denote $T_t$ its transition semigroup (in a proper Banach space) and $L$ the generator of this semigroup.
Let $S_t, t\geq 0$, be a subordinator (i.e., a  non-decreasing real-valued L\'evy process) with $S_0=0$
and the Laplace exponent $\Phi$:
$$
\mathbf Ee^{-\lambda S_t} = e^{-t \Phi(\lambda)} \;\; t,\lambda >0.
$$
We assume that $S_t$ is independent of $X_t$.

Denote $E_t, t>0$, the inverse subordinator and introduce the time changed process
$Y_t = X_{E_t}$.  We are interested in the time evolution
$$
v(x,t) = \mathbf E^x [f(Y_t)]
$$
for a given initial function $f$.   Note that taking informally  $f=\delta$ we arrive at the fundamental
solution of the  related evolution problem.    It is well known,  see e.g. \cite{Toaldo2015}, \cite{Chen2017},
 that $v(t,x)$ is the unique strong solution  to the following  Cauchy problem
$$
\mathbb{D}^{(k)}_t v(x,t) = Lv(x,t) \;\; v(x,0)=f(x).
$$
Here we use a generalized fractional derivative
$$
\mathbb{D}^{(k)}_t \phi (t)= \frac{d}{dt} \int_0^t k(t-s) (\phi(s) -\phi(0))ds
$$
with a kernel $k$ uniquely defined by $\Phi$.

Let $u(x,t)$ be the solution to a similar Cauchy problem but with the ordinary time derivative.
In stochastic terminology,  it is the solution to the  forward Kolmogorov equation corresponding to the
process $X_t$. Under quite general assumptions there is a nice  and essentially obvious relation between these evolutions:
$$
v(x,t)= \int_0^\infty u(x,s) G_t(s)\, ds,
$$
where $G_t(s)$ is the density of $E_t$. Of course, we may have similar relations for fundamental
solutions to considered equations, for the backward Kolmogorov equations or time evolutions of other
related quantities. This technical relation between the random time change and
evolution equations with fractal derivatives is an important technical background
in the study of resulting processes.

Having in mind the analysis of the  influence of the random time change on the asymptotic properties
of $v(x,t)$, we may hope that the latter formula gives all necessary technical equipments. Unfortunately, the situation
is essentially more complicated. The point is about the density $G_t(s)$, in general, our knowledge for a generic
subordinator is very poor. There are two particular cases in which the asymptotic analysis was already realized.
First of all, it is the situation of so-called stable subordinators. Starting with pioneering works by Meerschaert and his collaborators, this
case was studied in details \cite{Baeumer2001,Meerschaert2004}.

Another case is related to a scaling property assumed for $\Phi$ \cite{Chen2018}. It is, nevertheless,
difficult  to
give an interpretation of this scaling assumption in terms of the subordinator.

The problem of asymptotic behaviour of a solution to a fractional evolution equation includes two
essentially different aspects. On the one hand, we should choose certain class of random times.
Another point is a particular type of Markov processes we start with.  In this paper we restrict ourself
to the situation of inverse stable subordinators as random times. Initial Markov processes that we consider are
pure jump homogeneous Markov processes also known as compound Poisson processes or
random walks in $\mathbb{R}^d$ with continuous time.  More precisely, we will be concerned
with the time asymptotic of corresponding fundamental solutions or, that  is the same, related heat
kernels.

Our goal is to describe the large time behavior of the time fractional nonlocal heat kernel $w_\alpha(x,t), \; 0<\alpha<1$, that is a solution of the following fractional time parabolic problem:
\begin{equation}\label{tfhk}
\left\{
\begin{array}{l}
\partial^\alpha_t w_\alpha \ = \ a \ast w_\alpha \ - \ w_\alpha \\
w_\alpha \,|_{t=0} \ = \ \delta_0
\end{array}
\right.
\end{equation}
where $\partial^\alpha_t $ is the fractional derivative (the Caputo derivative of the order $\alpha \in (0,1)$) and
$a(x)$ is a convolution kernel.
We assume that $a(x) \ge 0; \; a(x) = a(-x); \; a(x) \in C_b(\mathbb R^d) \cap L^1(\mathbb R^d)$, and
$$
\int_{\mathbb R^d} a(x) dx =1.
$$
We assume additionally that
the convolution kernel $a(x)$ satisfies for some  $p > 1$ the following condition
\begin{equation}\label{lt}
0 \le a(x) \le C_1 e^{- b |x|^p}.
\end{equation}
Denote by $u(x,t)$ the fundamental solution of  a nonlocal heat equation
\begin{equation}\label{he}
\left\{
\begin{array}{l}
\frac{\partial u}{\partial t} \ = \ a \ast u \ - \ u \\
u|_{t=0} \ = \ \delta_0.
\end{array}
\right.
\end{equation}
Then
\begin{equation}\label{uxt}
u(x,t) \ = \ e^{-t} \delta_0 (x) \ + \ q(x,t)
\end{equation}
with
\begin{equation}\label{v}
q(x,t) \ = \   \sum_{k=1}^{\infty} \frac{ t^k \, e^{-t} }{k!} \  a^{\ast k} (x).
\end{equation}
The function $q(x,t)$ is the regular part of the nonlocal heat kernel $u(x,t)$.

The solution $w_\alpha(x,t)$ of \eqref{tfhk} admits the following representation in terms of the heat kernel $u(x,t)$, see e.g. \cite{Chen2017}, \cite{Chen2018},
\begin{equation*}\label{pxt_bbis}
w_\alpha (x,t)  = \int_0^{\infty} u(x,r) d_r \mathbb{P}(S_r \ge t)  =  \int\limits_0^{\infty} u(x,r) G^{\alpha}_t (r)  dr,
\end{equation*}
where  $S = \{ S_r, \; r \ge 0 \}$ is the $\alpha$-stable subordinator with the Laplace transform $\mathbb{E} e^{-\lambda S_r} = e^{-r \lambda^\alpha}$ and $G^{\alpha}_t(r)= d_r \Pr \{ V^{(\alpha)}_t \le r  \}$ is the density of the inverse $\alpha$-stable subordinator $V^{(\alpha)}_t$.
By relations \eqref{uxt}-\eqref{v} we have
\begin{equation}\label{pxt}
w_\alpha (x,t)
 =  \delta_0 (x) \cdot \int\limits_0^{\infty} G^{\alpha}_t(r)\, e^{-r} dr +  \sum_{k=1}^{\infty} \frac{  a^{\ast k} (x)}{k!} \ \int\limits_0^{\infty} G^{\alpha}_t(r)\, r^k \, e^{-r} dr.
\end{equation}

Using the representation for the Laplace transform of $G^{\alpha}_t(r)$ (see e.g. \cite{UchaZol}):
$${\cal L}(G^{\alpha}_t(r)) = E_\alpha (- \lambda t^\alpha), \quad
E_\alpha \; \mbox{  is the Mittag-Leffler function},
$$
and the properties of the Laplace transform we get for every $k = 0,1,2, \ldots$
$$
\int\limits_0^{\infty} G^{\alpha}_t(r)\, r^k \, e^{-r} dr \ = \ (-1)^k \frac{\partial^k}{\partial \lambda^k} E_\alpha (-\lambda t^\alpha)|_{\lambda=1} \ = \ t^{\alpha k} \, E_\alpha^{(k)}(-t^\alpha).
$$
Consequently representation \eqref{pxt} implies the following formula for $w_\alpha(x,t)$:
\begin{equation}\label{pxtE}
w_\alpha(x,t) \ =   \  E_\alpha (-t^\alpha) \delta_0 (x) \ + \  p_\alpha (x,t),
\end{equation}
where the function $p_\alpha(x,t)$ defined by
\begin{equation}\label{pxtE-bis}
p_\alpha(x,t) \ =  \  \sum_{k=1}^{\infty} \frac{  a^{\ast k} (x)}{k!} \  t^{\alpha k} \, E_\alpha^{(k)}(-t^\alpha)
\end{equation}
is the regular part of $w_\alpha(x,t)$.
Let us notice, that in the case $\alpha=1$ with $E_1(z) = e^z$ we obtain solution \eqref{uxt}, i.e. $w_1(x,t)=u(x,t)$, and $p_1(x,t) = q(x,t)$.

Unfortunately, the elegant formula \eqref{pxtE} could not help much with describing point-wise asymptotics for $p_\alpha(x,t)$, and we choose in this paper an other way of studying the asymptotic behavior of $p_\alpha(x,t)$ which is based on the detailed asymptotic analysis of the function $q(x,t)$ that was done in our previous paper \cite{GKPZ}.

\medskip


Denote by $g_\alpha(s,r), \; s \ge 0$, the density of the $\alpha$-stable subordinator $S_r$. The process $S_r$ has the following self-similarity property:

\medskip

the distribution of $S_r$ is the same as the distribution of $r^{1/\alpha} S_1$.

\medskip
\noindent Consequently
\begin{equation}\label{ss}
g_\alpha(s,r) = r^{-1/\alpha} g_\alpha (s r^{-1/\alpha}), \; s \ge 0,
\end{equation}
where $g_\alpha (s) = g_\alpha (s,1)$ is the density of the $\alpha$-stable law with Laplace transform
$$
\int_0^{\infty} e^{-\lambda s} g_\alpha(s) ds = e^{-\lambda^\alpha}.
$$

In addition, the density $g_\alpha(s), \; s \ge 0$ has the following asymptotics, see e.g. \cite{UchaZol}, \cite{MS}:
\begin{equation}\label{g}
\begin{array}{l}
\displaystyle
g_\alpha(s) \ \sim \ K_\alpha\, \Big( \frac{\alpha}{s} \Big)^{\frac{2-\alpha}{2(1-\alpha)}} \exp \big\{-|1-\alpha|  \Big( \frac{s}{\alpha}\Big)^{\frac{\alpha}{\alpha-1}}   \big\}, \quad \mbox{as } \; s \to 0+; \\[3mm]
\displaystyle
g_\alpha(s) \ \sim \  \frac{\alpha}{\Gamma(1-\alpha)} s^{-\alpha-1},  \quad \mbox{as } \; s \to + \infty,
\end{array}
\end{equation}
with $K_\alpha=\big(2\pi\alpha(1-\alpha)\big)^{-\frac12}$.
Then  the density $ G_t(r)$ of the inverse $\alpha$-stable subordinator $V_t$ has the form
\begin{equation}\label{densityG}
G_t (r) \ = \ \frac{1}{\alpha}\, t \, r^{-1-\frac{1}{\alpha}} \, g_\alpha (t  r^{-\frac{1}{\alpha}}),
\end{equation}
see e.g. \cite{MSK}, \cite{MS}.
The relation \eqref{pxt} implies that the regular part $p = p_\alpha $ of the fundamental solution $w_\alpha$ of the time fractional equation can be written as
\begin{equation}\label{fundamentalsolution}
p(x,t) \ = \   \int_0^{\infty} q(x,r) d_r \mathbb{P}(S_r \ge t) \ = \ \int_0^{\infty} q(x,r) G_t(r) dr.
\end{equation}
In what follows for the sake of brevity we  use the notation $p(\cdot)$ instead of $p_\alpha(\cdot)$.
Using \eqref{densityG} and the change variables $z = t  r^{-{1}/{\alpha}}$ one can rearrange equality \eqref{fundamentalsolution} as
\begin{equation}\label{fundamentalsolution1}
p(x,t) \ = \   \int_0^{\infty} g_\alpha (z) q\big(x, \frac{t^\alpha}{z^\alpha} \big) d z.
\end{equation}
Make in the integral on the right-hand side the change variables
\begin{equation}\label{xi}
s = z^{-\alpha}
\end{equation}
and denote
$$
\hat g_\alpha (s) = g_\alpha (z)|_{z = s^{-1/\alpha}}, \quad W_\alpha (s) = \frac{1}{\alpha} s^{-\frac{1}{\alpha}-1} \hat g_\alpha (s).
$$
Then \eqref{fundamentalsolution1} takes the form:
\begin{equation}\label{fundamentalsolution2}
p(x,t) \ = \   \int_0^{\infty}  \frac{1}{\alpha} s^{-\frac{1}{\alpha}-1} \hat g_\alpha (s) q(x, t^\alpha s) d s \  =  \ \int_0^{\infty} W_\alpha (s) q(x, t^\alpha s) d s.
\end{equation}

\medskip

Notice that in the new variable $s$ defined in \eqref{xi} even for small $s$ such that  $s \gg t^{-\alpha}$ the behaviour of the function $q(x, t^\alpha s)$  is governed by the large time asymptotics of the function $q(x,\tau)$.

Moreover, the asymptotic formulae in \eqref{g} imply the following asymptotics for the function $ W_\alpha (s)$:
\begin{equation}\label{Phi}
\begin{array}{l}
W_\alpha (s) \ \sim \ c_1(\alpha)\, s^{ \frac{1}{2(1-\alpha)} - 1} \exp \{- c_2(\alpha) s^{\frac{1}{1-\alpha}}    \}, \quad \mbox{as } \; s \to \infty; \\ \\
W_\alpha (s)  \ \to \  \frac{\alpha}{\Gamma(1-\alpha)} \frac{1}{\alpha},  \quad \mbox{as } \; s \to 0+,
\end{array}
\end{equation}
with $c_2(\alpha)=(1-\alpha)\alpha\big.^{\frac\alpha{1-\alpha}}$.
Function $ W_\alpha (s) $ coincides with the so-called Wright function, see \cite{GoLuMa}. It readily follows from \eqref{Phi} that the function $ W_\alpha (s) $ has a finite positive limit as $s \to 0+$, and $\int_0^{\infty} W_\alpha (s) d s =1 $ since $W_\alpha$ is a probability density.

\medskip
Representation \eqref{fundamentalsolution2} and the asymptotic formulae in \eqref{Phi} allow one to study the large
time behaviour of $p(x,t)$. It turns out that the asymptotics of $p(t,x)$ depends crucially on the ratio between $|x|$ and $t$.
We consider separately the following regions:
\begin{itemize}
\item $|x|$ is bounded
\item (Subnormal deviations) \ $1\ll|x|\ll t^\frac\alpha2$, or equivalently,  
there exists an increasing function $r(t)$, $r(0)=0$, $\lim\limits_{t \to \infty} r(t) =+ \infty$ such that  $r(t) \le |x| \le (r(t)+1)^{-1} t^{\alpha/2}$  for all sufficiently large $t$.
\item (Normal deviations) \ $x=vt^{\alpha/2}(1+o(1))$, where $v$ is an arbitrary vector in $\mathbb R^d\setminus\{0\}$.
\item (Moderate deviations) \ $x=vt^\beta(1+o(1))$ with $\frac\alpha2<\beta<1$ and $v\in\mathbb R^d\setminus\{0\}$.
\item (Large deviations) \ $x=vt(1+o(1))$ with  $v\in\mathbb R^d\setminus\{0\}$.
\item (Extra large deviations) \ $|x|\gg t$, i.e. $\lim\limits_{t\to\infty}\frac{|x(t)|}{t} = \infty$.
\end{itemize}

\bigskip

\begin{remark}
{\rm
Notice that for any positive function $r(t)$ such that $r(t) \to \infty$ and $ r(t) t^{-\alpha/2} \to 0$, as $t\to\infty$, the set $\{(x,t)\,:\, r(t)<|x|<(1+r(t))^{-1}t^\frac\alpha2$ belongs to the region of subnormal deviations $\{(x,t)\in\mathbb R^d\times(0,+\infty)\,:\, 1\ll|x|\ll t^{\frac\alpha2}\}$}.
\end{remark}
Denote
\begin{equation}\label{Psi}
\Psi(v,s)=\frac1{|\mathrm{det}\sigma|^{1/2}(2\pi s)^{d/2}}\exp\Big(-\frac{(\sigma^{-1} v,v)}s\Big).
\end{equation}

\begin{theorem}
For the function $p(x,t)$ the following asymptotic relations hold as $t \to \infty$: \\

\noindent
1) If $|x|$ is bounded, then

\begin{equation}\label{B1-T}
\begin{array}{ll}
\mathtt{c}_-t^{-\frac\alpha2}\leq p(x,t)\leq\mathtt{c}_+t^{-\frac\alpha2} &\hbox{\rm if }d=1,\\[2mm]
\mathtt{c}_-t^{-\alpha}\log t \leq p(x,t)\leq
\mathtt{c}_+t^{-\alpha}\log t & \hbox{\rm if }d=2,\\[2mm]
\mathtt{c}_-t^{-\alpha}\leq p(x,t)\leq
\mathtt{c}_+t^{-\alpha}  & \hbox{\rm if }d\geq 3.
\end{array}
\end{equation}


\medskip\noindent
2) If  $1\ll|x|\ll t^{\frac\alpha2}$, then
\begin{equation}\label{est_geq3}
\begin{array}{ll}
\mathtt{c}_-t^{-\frac\alpha2}\leq p(x,t)\leq\mathtt{c}_+t^{-\frac\alpha2} &\hbox{\rm if }d=1,\\[2mm]
\mathtt{c}_-t^{-\alpha}\log\Big(\frac{t^\alpha}{|x|^2}\Big)\leq p(x,t)\leq
\mathtt{c}_+t^{-\alpha}\log\Big(\frac{t^\alpha}{|x|^2}\Big)& \hbox{\rm if }d=2,\\[2mm]
\mathtt{c}_-t^{-\alpha}{|x|^{2-d}}\leq p(x,t)\leq
\mathtt{c}_+t^{-\alpha}{|x|^{2-d}} & \hbox{\rm if }d\geq 3.
\end{array}
\end{equation}

\medskip\noindent
3) If  $x=vt^{\alpha/2}(1+o(1))$ with $v \in \mathbb R^d\setminus\{0\}$, then
\begin{equation}\label{B2-T}
p(t^{\alpha/2}v,t)=  t^{-\frac{d\alpha}2}\int\limits_0^\infty W_\alpha(s) \Psi(v,s)\,ds\,\big(1+o(1)\big).
\end{equation}

\medskip\noindent
4) If $x=vt^\beta(1+o(1))$ with $\frac\alpha2<\beta<1$ and $v\in\mathbb R^d\setminus\{0\}$, then
\begin{equation}\label{B3-T}
p(x,t)= \exp\big\{ -K_v \,t^{\frac{2\beta-\alpha}{2-\alpha}} (1+o(1)) \big\}
\end{equation}
with the constant
$$
K_v = (2-\alpha)\alpha\big.^\frac\alpha{2-\alpha} \ \big(\frac12 (\sigma^{-1} v, v) \big)\big.^{\frac1{2-\alpha}}.
$$

\medskip\noindent
5) If $x=vt(1+o(1))$ with  $v\in\mathbb R^d\setminus\{0\}$, then
\begin{equation}\label{B4-T}
p(x,t)=\exp\big\{ -{\mathtt F}(v)t(1+o(1))\big\},
\end{equation}
the function ${\mathtt F}$ is introduced in \eqref{def_f_tt}.

\medskip\noindent
6) If $|x| \gg t$, then
\begin{equation}\label{B5-T}
 p(x,t)\leq
\exp\big\{-\mathtt{c}\big._+|x|\,\big(\log\big|\textstyle{\frac{x}{t}}\big|\big)^{\frac{p-1}p}\big\}
\end{equation}

\end{theorem}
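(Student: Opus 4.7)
The unifying identity is the representation \eqref{fundamentalsolution2}, which reduces every regime to a one-dimensional Laplace-type integral of the Wright density $W_\alpha(s)$ against the already-understood regular part $q(x,t^\alpha s)$ of the non-fractional heat kernel. The plan is therefore, for each range of $|x|/t^{\alpha/2}$, to substitute the pointwise asymptotics of $q(x,\tau)$ proved in \cite{GKPZ} into the $s$-integral and to perform the resulting Laplace analysis using the two-sided bounds \eqref{Phi} for $W_\alpha$ near $0$ and at infinity.

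\emph{Parts (1)--(3): diffusive regimes.} Here the relevant input is the Gaussian local limit $q(x,\tau)\sim (2\pi\tau)^{-d/2}|\mathrm{det}\,\sigma|^{-1/2}\exp(-(\sigma^{-1}x,x)/(2\tau))$ valid for $|x|^2\ll\tau$, supplemented by a uniform bound $q\le C$ in the complementary range. For part (3) one substitutes $x=vt^{\alpha/2}$, observes that $|x|^2/(t^\alpha s)=|v|^2/s$ is of order $1$ on the $s$-support of $W_\alpha$, and uses dominated convergence to pass to the limit \eqref{B2-T}. For (1) and (2) the integrand behaves as $(t^\alpha s)^{-d/2}$ above the threshold $s_\ast\sim\max(1,|x|^2)/t^\alpha$ and is uniformly $O(1)$ below it; integrating the power $s^{-d/2}$ from $s_\ast$ to $O(1)$ against the bounded $W_\alpha$ produces the factor $t^{-\alpha/2}$ in $d=1$, the factor $t^{-\alpha}\log(t^\alpha/\max(1,|x|^2))$ in $d=2$, and $t^{-\alpha}\max(1,|x|)^{2-d}$ in $d\ge3$. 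Matching lower bounds come from restricting the integration to the same subrange and invoking the two-sided Gaussian estimates for $q$ from \cite{GKPZ}.

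\emph{Parts (4)--(5): competing exponentials.} In these ranges two exponential factors compete in \eqref{fundamentalsolution2}: the Gaussian of $q$, $\exp(-(\sigma^{-1}x,x)/(2t^\alpha s))$, in the moderate-deviation regime, or its large-deviation extension $\exp(-t^\alpha s\,I(x/(t^\alpha s)))$ in the large-deviation regime (both taken from \cite{GKPZ}); and the Wright tail $\exp(-c_2(\alpha)s^{1/(1-\alpha)})$ coming from \eqref{Phi}. In each case one applies the Laplace method in $s$: locate the saddle $s^\ast$ that minimizes the sum of exponents and show that the integrand decays subexponentially away from $s^\ast$. For (4), minimizing $(\sigma^{-1}v,v)t^{2\beta-\alpha}/(2s)+c_2(\alpha)s^{1/(1-\alpha)}$ gives $s^\ast\sim t^{(2\beta-\alpha)(1-\alpha)/(2-\alpha)}$ and the minimum value is precisely $K_v\,t^{(2\beta-\alpha)/(2-\alpha)}$, matching \eqref{B3-T}. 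For (5) the natural substitution $s=t^{1-\alpha}\xi$ makes both competing exponents of order $t$, and minimizing $\xi\,I(v/\xi)+c_2(\alpha)\xi^{1/(1-\alpha)}$ in $\xi$ recovers the rate ${\mathtt F}(v)$ defined in \eqref{def_f_tt}, giving \eqref{B4-T}.

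\emph{Part (6) and main obstacle.} For $|x|\gg t$ I would first use the super-exponential tail \eqref{lt} and the convexity of $y\mapsto|y|^p$ to obtain $a^{\ast k}(x)\le C^k\exp(-b|x|^p/(2k)^{p-1})$; plugging this into \eqref{v} and optimizing the Poisson sum in $k$ produces a bound $q(x,\tau)\le\exp(-c|x|(\log(|x|/\tau))^{(p-1)/p})$ valid in the region $|x|\gg\tau$, and inserting this bound into \eqref{fundamentalsolution2}, with the $s$-integral truncated at a scale where the Wright factor kills the tail, yields \eqref{B5-T}. The principal technical obstacle is uniformity in the saddle-point analysis for parts (4)--(5): one needs the asymptotics of $q$ from \cite{GKPZ} to be two-sided and to hold uniformly both in a shrinking neighbourhood of $s^\ast$ and in the argument $x/(t^\alpha s)$, so that the naive pointwise Laplace estimate can be upgraded to the $(1+o(1))$ precision in the exponent stated in the theorem.
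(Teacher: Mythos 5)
Your plan is essentially the paper's own argument: start from representation \eqref{fundamentalsolution2}, substitute the [GKPZ] asymptotics for $q(x,\tau)$ in the relevant regime, split the $s$-integral into intervals around the dominant scale, and control the remaining regions with the two-sided bounds \eqref{Phi} for $W_\alpha$; the saddle-point computations you sketch for parts (4) and (5) produce exactly the exponents $K_v t^{(2\beta-\alpha)/(2-\alpha)}$ and $\mathtt{F}(v)t$ used in the text. The obstacle you flag --- uniformity of the $q$-asymptotics near the saddle so that pointwise Laplace estimates upgrade to a $(1+o(1))$ in the exponent --- is precisely what the paper addresses by explicit interval decompositions (e.g.\ $J_1,J_2,J_3$ in Theorems \ref{t_normal_deva}--\ref{t_umerennost} and Proposition \ref{p_missedbound}), so your outline and the paper's proof match in substance.
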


\medskip

\begin{remark}{\rm
Observe that the region of large deviations  $\{(x,t)\,:\,|x|\sim t\}$ for the time fractional heat kernel studied in this work
is the same as that for the heat kernel $q$ of  equation \eqref{he}, \eqref{uxt}.\\
It should also be noted that in the region of extra large deviations $|x|\gg t$ the asymptotic upper bound \eqref{B5-T}
is similar to that obtained in \cite{GKPZ} for $q(x,t)$.
}
\end{remark}

\section{Subnormal deviation region.} \
\label{s_subnormo}
In this section we deal with the region $\{(x,t)\,:\, |x|\ll t^\frac\alpha2\}$.
We consider separately the cases of bounded $|x|$ and growing $|x|$.

\subsection{The case of bounded  $|x|$.} \
\label{ss_bou_x}
In this case
\begin{equation}\label{B1}
\begin{array}{l}
q(x, t^\alpha s) \ \le \ C_1 \min \big\{ t^\alpha s; \; (t^\alpha s)^{-\frac{d}{2}} \big\},\\
q(x, t^\alpha s) \ \ge \ C_2 \min \big\{ t^\alpha s; \; (t^\alpha s)^{-\frac{d}{2}} \big\}
\end{array}
\end{equation}
with some constants $C_1, C_2>0$. Indeed, the estimate by $t^\alpha s$ holds for small value of $\tau = t^\alpha s$, while the estimate $(t^\alpha s)^{-\frac{d}{2}}$ holds for large $\tau = t^\alpha s$.

Using representation \eqref{fundamentalsolution2} we get
\begin{equation}\label{B2}
p(x,t) = \int_0^{\infty} W_\alpha (s) q(x, t^\alpha s) d s \le \tilde C_1 \int_0^{t^{-\alpha}} t^\alpha s d s + C_1 \int_{t^{-\alpha}}^{\infty} W_\alpha (s) (t^\alpha s)^{-\frac{d}{2}} d s.
\end{equation}
The analogous estimate from below holds with an other constant, as follows from \eqref{B1}.
Let us estimate the second integral in \eqref{B2}:
\begin{equation}\label{B2.1}
t^{-\frac{\alpha  d}{2}} \int_{t^{-\alpha}}^{\infty} W_\alpha (s) s^{-\frac{d}{2}} d s.
\end{equation}
Using the properties of the function $W_\alpha (s) $ we get for all $d \neq 2$
\begin{equation}\label{B3}
\int_{t^{-\alpha}}^{1} W_\alpha (s) s^{-\frac{d}{2}} d s + \int_{1}^{\infty} W_\alpha (s) s^{-\frac{d}{2}} d s =
c_3 t^{-\alpha+ \frac{d \alpha}{2}} + c_4,
\end{equation}
and for $d=2$:
\begin{equation}\label{B4}
\int_{t^{-\alpha}}^{1} W_\alpha (s) s^{-\frac{d}{2}} d s + \int_{1}^{\infty} W_\alpha (s) s^{-\frac{d}{2}} d s =
c_5 \alpha \ln t + c_6.
\end{equation}
Here $c_j$ are constants.
Combining \eqref{B2} - \eqref{B4} we obtain the asymptotics \eqref{B1-T} for $p(x,t)$.

\bigskip

\subsection{The case $1\ll|x|\ll t^\frac\alpha2$.} \
\label{ss_almbou_x}

Here we study the asymptotic behaviour of $p(x,t)$ in the region
$\{(x,t)\in\mathbb R^d\times(0,+\infty)\,:\, 1\ll|x|\ll t^{\frac\alpha2}\}$ as $t\to\infty$.

\begin{theorem}\label{t_smallnormal}
Let $r=r(t)$ be an increasing function such that $r(0)=0$ and $\lim\limits_{t\to\infty}r(t)=+\infty$. Then
for all $x\in\mathbb R^d$ such that $r(t)\leq|x|\leq (r(t)+1)^{-1}t^\frac\alpha2$ and for all sufficiently large $t$
we have
\begin{equation}\label{est_geq3}
\begin{array}{ll}
\mathtt{c}_-t^{-\frac\alpha2}\leq p(x,t)\leq\mathtt{c}_+t^{-\frac\alpha2}, &\hbox{\rm if }d=1,\\[2mm]
\mathtt{c}_-t^{-\alpha}\log\big(\frac{t^\alpha}{|x|^2}\big)\leq p(x,t)\leq
\mathtt{c}_+t^{-\alpha}\log\big(\frac{t^\alpha}{|x|^2}\big),& \hbox{\rm if }d=2,\\[2mm]
\mathtt{c}_-t^{-\alpha}{|x|^{2-d}}\leq p(x,t)\leq
\mathtt{c}_+t^{-\alpha}{|x|^{2-d}}, & \hbox{\rm if }d\geq 3.
\end{array}
\end{equation}
\end{theorem}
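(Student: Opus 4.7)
The starting point is the representation
\[
p(x,t)=\int_0^\infty W_\alpha(s)\,q(x,t^\alpha s)\,ds
\]
from \eqref{fundamentalsolution2}, combined with the pointwise Gaussian asymptotics of the heat kernel $q(x,\tau)$ obtained in \cite{GKPZ}. What I need from the latter is that, in the diffusive regime $|x|^2\lesssim\tau$, there are two-sided bounds
\[
C_-\tau^{-d/2}e^{-(\sigma^{-1}x,x)/(2\tau)}\le q(x,\tau)\le C_+\tau^{-d/2}e^{-(\sigma^{-1}x,x)/(2\tau)},
\]
while in the complementary regime $|x|^2\gtrsim\tau$ a Gaussian upper bound $q(x,\tau)\le C\tau^{-d/2}e^{-c|x|^2/\tau}$ holds. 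These bounds together with the asymptotics \eqref{Phi} of $W_\alpha$ (finite positive limit at $0$, super-exponential decay at infinity) are the only analytic ingredients.

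The natural splitting scale is $s_0:=|x|^2 t^{-\alpha}$, which under the hypothesis $r(t)\le|x|\le(r(t)+1)^{-1}t^{\alpha/2}$ tends to $0$ as $t\to\infty$. For $s\ge s_0$ one has $t^\alpha s\ge|x|^2$, so $q(x,t^\alpha s)\asymp(t^\alpha s)^{-d/2}$ with the Gaussian factor uniformly of order $1$; for $s<s_0$ the factor $e^{-c|x|^2/(t^\alpha s)}$ furnishes strong decay. The main contribution is therefore
\[
t^{-\alpha d/2}\int_{s_0}^\infty W_\alpha(s)\,s^{-d/2}\,ds,
\]
and the three cases of the theorem correspond to the behaviour of this integral as $s_0\to 0$: for $d=1$ the integrand is integrable at zero, so the integral converges to a positive constant and the main term is of order $t^{-\alpha/2}$; for $d=2$ it is logarithmically divergent, giving order $t^{-\alpha}\log(t^\alpha/|x|^2)$; and for $d\ge 3$ it is of power type, giving $t^{-\alpha d/2}\,s_0^{1-d/2}=t^{-\alpha}|x|^{2-d}$. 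The super-exponential decay of $W_\alpha$ at infinity handles the upper end uniformly.

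The complementary piece $\int_0^{s_0}W_\alpha(s)q(x,t^\alpha s)\,ds$ is estimated by the change of variable $u=|x|^2/(t^\alpha s)$ combined with the Gaussian upper bound on $q$ and the boundedness of $W_\alpha$ near zero; a direct computation reduces it to a multiple of $|x|^{2-d}t^{-\alpha}\int_1^\infty u^{d/2-2}e^{-cu}\,du$ (with an extra factor $|x|\,t^{-\alpha/2}$ when $d=1$). Comparison with the main term then gives: in $d=1$ the hypothesis $|x|\ll t^{\alpha/2}$ yields $|x|t^{-\alpha}\ll t^{-\alpha/2}$; in $d=2$ the same hypothesis yields $t^{-\alpha}\ll t^{-\alpha}\log(t^\alpha/|x|^2)$; and in $d\ge 3$ the two contributions are of the same order, which is harmless for the upper bound. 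The matching lower bound is obtained by retaining only the integral over $s\ge s_0$ together with the lower Gaussian bound on $q$.

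The main obstacle is the uniformity of the pointwise bounds on $q$ inherited from \cite{GKPZ}: I have to verify that the constants $C_\pm$ in the two-sided diffusive bound remain bounded and bounded away from zero along sequences with $|x|\to\infty$ and $|x|^2/\tau\to 0$, and that the Gaussian tail estimate in the regime $|x|^2\gtrsim\tau$ has a genuine exponential-squared decay rather than a weaker sub-Gaussian one (otherwise the change-of-variable integral above would not converge). Once these inputs are extracted cleanly from \cite{GKPZ}, the rest of the argument reduces to the elementary integral bookkeeping outlined above.
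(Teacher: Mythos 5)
Your approach is structurally the same as the paper's: represent $p(x,t)=\int_0^\infty W_\alpha(s)q(x,t^\alpha s)\,ds$, split at a scale of order $s_0=|x|^2t^{-\alpha}$, invoke the two-sided Gaussian bound on $q$ (Proposition~\ref{p_missedbound}) in the bulk, and do the elementary integral bookkeeping. The main-term computation and the lower bound are correct and match the paper's.

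The genuine gap is in how you treat the complementary piece $\int_0^{s_0}$. You assert a Gaussian upper bound $q(x,\tau)\le C\tau^{-d/2}e^{-c|x|^2/\tau}$ "in the complementary regime $|x|^2\gtrsim\tau$," and you do flag in the final paragraph that you would need to check that this is "a genuine exponential-squared decay." But this is a misdiagnosis of the difficulty. Proposition~\ref{p_missedbound} is stated only for $|x|\leq\tau$ (that is, $s\geq|x|t^{-\alpha}$). For $\tau\lesssim|x|$ the decay of $q(x,\tau)$ in $|x|$ is governed by large and extra-large deviations: it is of the form $\exp\{-c|x|\big(\log\frac{|x|}{\tau}\big)^{\frac{p-1}{p}}\}$ (cf.\ Proposition~\ref{p_susula} and $q(x,\tau)\approx\tau\,a(x)\lesssim\tau e^{-b|x|^p}$ for $\tau$ bounded). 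For $p<2$ this is strictly slower than Gaussian, so the bound you assume is simply false on the sub-interval $s\in(0,|x|t^{-\alpha})$; the issue is not a matter of constants. Consequently your change-of-variable computation for $\int_0^{s_0}$ only legitimately covers $s\in(|x|t^{-\alpha},s_0)$, and for $d\geq3$ the remaining piece $\int_0^{|x|t^{-\alpha}}$ still needs a bound better than the trivial $q\leq C$, which gives only $O(|x|t^{-\alpha})$ and does not beat $t^{-\alpha}|x|^{2-d}$.

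The fix is exactly what the paper does on its interval $J_1=(0,|x|t^{-\alpha})$: invoke the large-deviations estimate from \cite{GKPZ} (Theorem~3.2 there) to get $q(x,st^\alpha)\leq\exp(-c|x|)$ there, which yields a contribution $\lesssim t^{-\alpha}|x|e^{-c|x|}\lesssim t^{-\alpha}|x|^{2-d}$. With this extra ingredient your argument closes, and on the remaining range $s\geq|x|t^{-\alpha}$ your change-of-variable estimate and the main-term analysis via $\int_{s_0}^\infty W_\alpha(s)s^{-d/2}\,ds$ are sound and give precisely the three cases of the theorem.
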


\begin{proof}
Our arguments rely on the following statement.
\begin{proposition}\label{p_missedbound}
There exist positive constants $c_{j}>0$, $j=1,\,2,\,3,\,4$, such that for all sufficiently large $s>0$ and $x\in\{x\in\mathbb R^d\,:\,|x|\leq s\}$ we have
\begin{equation}\label{f_miest}
c_1{s^{-\frac d2}}\exp\Big(-c_2\frac{|x|^2}s\Big)
\leq q(x,s)\leq c_3{s^{-\frac d2}}\exp\Big(-c_4{\frac{|x|^2}s}\Big)
\end{equation}
\end{proposition}
\begin{proof}[{The proof of this proposition is postponed till Appendix}]
\end{proof}

Let us consider the case $d \ge 3$.
We turn now to the upper bound in \eqref{est_geq3} and consider separately the intervals
$J_1=(0,|x|t^{-\alpha})$,   $J_2=(|x|t^{-\alpha},|x|^\frac32t^{-\alpha})$ and  $J_3=(|x|^\frac32t^{-\alpha},+\infty )$.

By the same arguments as in the proof of Theorem 3.2
in \cite[Section 3.4]{GKPZ} one can derive that
$$
q(x,st^{\alpha})\leq \exp(-c|x|) \qquad \mbox{ for all } \; s\in J_1
$$
with some $c>0$. This implies the inequality
\begin{equation}\label{vklad_j1}
\int_{J_1}W_\alpha(s)q(x,st^\alpha)ds\leq t^{-\alpha}|x|\exp(-c|x|)\leq ct^{-\alpha}|x|^{2-d}.
\end{equation}
According to Proposition \ref{p_missedbound}, for all $s\in J_2$
$$
q(x,st^\alpha)\leq c_3(st^\alpha)^{-\frac d2}\exp\big( -c_4\frac{|x|^2}{st^\alpha}\big)\leq
\exp\big( -c_4|x|^\frac12\big).
$$
Therefore,
\begin{equation}\label{vklad_j2}
\int_{J_2}W_\alpha(s)q(x,st^\alpha)ds\leq t^{-\alpha}|x|^\frac32\exp(-c_4|x|^\frac12)\leq ct^{-\alpha}|x|^{2-d}.
\end{equation}
Using one more time Proposition \ref{p_missedbound}, we obtain
$$
\int\limits_{J_3}W_\alpha(s)q(x,st^\alpha)ds\leq\int\limits_{|x|^\frac32t^{-\alpha}}^\infty
c_3(st^\alpha)^{-\frac d2}\exp\big( -c_4\frac{|x|^2}{st^\alpha}\big)ds
$$
$$
=t^{-\alpha}|x|^{2-d}\int\limits_{|x|^{-\frac12}}^\infty
c_3s^{-\frac d2}\exp\big( -\frac{c_4}{s}\big)ds\leq t^{-\alpha}|x|^{2-d}\int\limits_0^\infty
c_3s^{-\frac d2}\exp\big( -\frac{c_4}{s}\big)ds
$$
Combining the latter estimate with \eqref{vklad_j1} and  \eqref{vklad_j2} yields the desired upper bound in
\eqref{est_geq3}.

In order to  obtain the lower bound in \eqref{est_geq3} we estimate from below the contribution
of the interval $s\in (t^{-\alpha}|x|^2,
2t^{-\alpha}|x|^2)$ as follows
$$
\int\limits_{|x|^2t^{-\alpha}}^{2|x|^2t^{-\alpha}}W_\alpha(s)q(x,st^\alpha)ds\geq
c_5\int\limits_{|x|^2t^{-\alpha}}^{2|x|^2t^{-\alpha}}(st^\alpha)^{-\frac d2}\exp\big( -c_2\frac{|x|^2}{st^\alpha}\big)ds
$$
$$
=c_5t^{-\alpha}|x|^{2-d}\int\limits_1^2s^{-\frac d2}\exp\big( -\frac{c_2}s\big)ds.
$$
This implies the required lower bound.

The cases $d=1$ and $d=2$ can be considered in a similar way.
\end{proof}

\section{Normal deviations region.}
\label{ss_no_dev}

In this section we assume that $x=vt^{\alpha/2}$.

\begin{theorem}\label{t_normal_deva}
Under our standing assumptions on $a(\cdot)$ for any $v \in \mathbb R^d\setminus\{0\}$
we have
\begin{equation}\label{B2-T_second}
p(t^{\alpha/2}v,t)=  t^{-\frac{d\alpha}2}\int\limits_0^\infty W_\alpha(s) \Psi(v,s)\,ds\,\big(1+o(1)\big),
\end{equation}
where $o(1)$ tends to zero as $t\to\infty$.
\end{theorem}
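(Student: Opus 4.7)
The plan is to insert a uniform local central limit theorem for the heat kernel $q(x,\tau)$ into the representation \eqref{fundamentalsolution2}, which for $x=vt^{\alpha/2}$ reads
$$p(vt^{\alpha/2},t) = \int_0^\infty W_\alpha(s)\, q(vt^{\alpha/2}, t^\alpha s)\,ds,$$
and then control the contributions from $s$ near $0$ and $+\infty$. The required analytic input, taken from \cite{GKPZ}, is the Gaussian local limit
$$q(x,\tau) = \frac{1}{|\det\sigma|^{1/2}(2\pi\tau)^{d/2}}\exp\Big(-\frac{(\sigma^{-1}x,x)}{2\tau}\Big)(1+o(1)),\qquad\tau\to\infty,$$
valid uniformly on $|x|\le C\sqrt\tau$. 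Under the scaling $x=vt^{\alpha/2}$, $\tau=t^\alpha s$ we have $|x|^2/\tau=|v|^2/s$, and the above reduces (up to the factor-of-$2$ convention implicit in the definition of $\sigma$ in \eqref{Psi}) to
$$q(vt^{\alpha/2},t^\alpha s) = t^{-d\alpha/2}\,\Psi(v,s)\,(1+o(1))$$
uniformly on every compact subinterval $[\varepsilon,N]\subset(0,\infty)$.

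Given this, I would fix $0<\varepsilon<N<\infty$ and decompose the $s$-integral over $(0,\varepsilon)\cup[\varepsilon,N]\cup(N,\infty)$. On the middle interval the uniform asymptotic just stated and dominated convergence yield
$$\int_\varepsilon^N W_\alpha(s)\,q(vt^{\alpha/2},t^\alpha s)\,ds = t^{-d\alpha/2}\int_\varepsilon^N W_\alpha(s)\,\Psi(v,s)\,ds\,(1+o(1)).$$
Since $\int_0^\infty W_\alpha(s)\Psi(v,s)\,ds$ is finite and positive (the factor $\exp(-(\sigma^{-1}v,v)/s)$ kills the singularity at $0$ and the super-exponential decay of $W_\alpha$ in \eqref{Phi} handles $+\infty$), a standard $\varepsilon\downarrow 0$, $N\uparrow\infty$ diagonal argument reduces the proof to showing that the $(0,\varepsilon)$ and $(N,\infty)$ contributions to $p(vt^{\alpha/2},t)$ are $o(t^{-d\alpha/2})$.

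For the large-$s$ tail, $|vt^{\alpha/2}|\le t^\alpha s$ once $s\ge N$ and $t$ is large, so Proposition \ref{p_missedbound} yields $q(vt^{\alpha/2},t^\alpha s)\le c(t^\alpha s)^{-d/2}$ and the contribution is bounded by $ct^{-d\alpha/2}\int_N^\infty W_\alpha(s)s^{-d/2}\,ds$, which vanishes as $N\to\infty$ by \eqref{Phi}. For the small-$s$ tail I would split further at $s_\ast=|v|^2 t^{-\alpha}$: on $(s_\ast,\varepsilon)$ Proposition \ref{p_missedbound} still applies and yields
$$q(vt^{\alpha/2},t^\alpha s)\le c_3(t^\alpha s)^{-d/2}\exp\Big(-\frac{c_4|v|^2}{s}\Big),$$
producing a contribution $\le ct^{-d\alpha/2}\int_0^\varepsilon W_\alpha(s)s^{-d/2}\exp(-c_4|v|^2/s)\,ds\to 0$ as $\varepsilon\downarrow 0$; on $(0,s_\ast)$, where $|vt^{\alpha/2}|>t^\alpha s$, the large-deviation bound from \cite{GKPZ} provides $q\le e^{-c|v|t^{\alpha/2}}$, giving a contribution exponentially small in $t^{\alpha/2}$ and hence negligible. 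The main obstacle is locating and applying the sharp uniform local CLT from \cite{GKPZ} with a $(1+o(1))$ error (rather than merely the one-sided Gaussian bounds of Proposition \ref{p_missedbound}); once that statement is in hand, the tail estimates are routine consequences of \eqref{Phi} and Proposition \ref{p_missedbound}.
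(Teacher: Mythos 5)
Your overall strategy---insert a local CLT for $q$ into the representation \eqref{fundamentalsolution2}, prove that the middle range of $s$ reproduces the integral $\int W_\alpha(s)\Psi(v,s)\,ds$, and show the two tails are $o(t^{-d\alpha/2})$---is the same as the paper's. The paper splits at $J_1=(0,\frac14 t^{-\alpha/2})$, $J_2=(\frac14 t^{-\alpha/2},\delta)$, $J_3=(\delta,\infty)$, proves a uniform (in $s\geq\delta$ \emph{and} $v\in\mathbb R^d$) local CLT \eqref{est_goodarea} in the Appendix so that no upper cutoff $N$ is needed, handles $J_2$ by a direct analysis of the Poisson sum \eqref{v} (showing $q(t^{\alpha/2}v,st^\alpha)\leq C(v)t^{-d\alpha/2}$ via \eqref{step2-3}--\eqref{step2-4}), and handles $J_1$ via the Markov-inequality Lemma \ref{Markov}. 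Your alternative of invoking Proposition~\ref{p_missedbound} on $(s_\ast,\varepsilon)$ and on $(N,\infty)$ is a legitimate and arguably tidier way to organize the tails.

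There is, however, a genuine gap in your choice of $s_\ast$. Proposition~\ref{p_missedbound} requires $|x|\leq \tau$ (with $\tau$ the second argument of $q$), which for $x=vt^{\alpha/2}$, $\tau=t^\alpha s$ reads $s\geq|v|t^{-\alpha/2}$, not $s\geq|v|^2t^{-\alpha}$. Since $|v|^2 t^{-\alpha}\ll |v|t^{-\alpha/2}$, on the range $s\in\big(|v|^2t^{-\alpha},\,|v|t^{-\alpha/2}\big)$ you have $|x|>\tau$ and the Gaussian upper bound of Proposition~\ref{p_missedbound} is simply not available; at the same time this range is not covered by your $(0,s_\ast)$ large-deviation estimate either, which you justify precisely by the inequality $|vt^{\alpha/2}|>t^\alpha s$ --- an inequality that in fact characterizes the larger set $s<|v|t^{-\alpha/2}$, exposing the mismatch. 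The fix is to take $s_\ast\asymp|v|t^{-\alpha/2}$: then on $(s_\ast,\varepsilon)$ Proposition~\ref{p_missedbound} applies and gives a contribution $\leq c\,t^{-d\alpha/2}\int_0^\varepsilon W_\alpha(s)s^{-d/2}e^{-c_4|v|^2/s}\,ds\to0$ as $\varepsilon\to0$, while on $(0,s_\ast)$ one uses a large-/extra-large-deviation bound (Lemma~\ref{Markov} or Proposition~\ref{p_susula}) to get an $\exp(-ct^{\alpha/2})$ contribution. With that correction your argument goes through and is essentially a streamlined version of the paper's Steps~2--3.
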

\begin{proof}
In  representation \eqref{fundamentalsolution2} it is convenient
to divide the integration interval into three parts, $J_1=(0, \frac14t^{-\alpha/2})$, $J_2=(\frac14t^{-\alpha/2},\delta )$
and $J_3=(\delta,+\infty)$, where $\delta$ is a sufficiently small number that will be chosen later.

\noindent
{\sl Step 1}. We first estimate the contribution of $J_3$.
 According to \cite[Theorem 19.1]{BhRao} we have
\begin{equation}\label{globest}
\lim\limits_{n\to\infty} \max\limits_{v\in\mathbb R^d}\big| n^{d/2} a^{*n}\big(\sqrt{n}v\big)-\Psi(v,1)\big|=0,
\end{equation}
where the function $\Psi$ was defined in \eqref{Psi}.
This implies in the standard way that for any $\delta>0$
\begin{equation}\label{est_goodarea}
\lim\limits_{t\to\infty}\ \sup\limits_{s\geq\delta, \,v\in\mathbb R^d}\
\big| t^{\frac{d\alpha}2} q\big(t^{\frac\alpha2}v, st^\alpha\big)-\Psi(v,s)\big|=0.
\end{equation}
See the proof of relation \eqref{est_goodarea} in Appendix.
By the Lebesgue theorem
\begin{equation}\label{lim_goodarea}
t^{\frac{d\alpha}2}\int\limits_\delta^\infty W_\alpha(s)q\big(t^{\frac\alpha2}v, st^\alpha\big)\,ds
\longrightarrow \int\limits_\delta^\infty W_\alpha(s) \Psi(v,s)\,ds
\end{equation}
 for each $v\in\mathbb R^d$, as $t\to\infty$.  Consequently,
 \begin{equation}\label{contr_goodarea}
\int\limits_\delta^\infty W_\alpha(s)q\big(t^{\frac\alpha2}v, st^\alpha\big)\,ds
= t^{-\frac{d\alpha}2}\int\limits_\delta^\infty W_\alpha(s) \Psi(v,s)\,ds\,\big(1+o(1)\big),
\end{equation}
where $o(1)$ tends to zero as $t\to\infty$. Observe also that
\begin{equation}\label{hvostik_pred}
\int\limits_0^\delta W_\alpha(s) \Psi(v,s)\,ds\,\to 0,\quad\hbox{as }\delta\to0.
\end{equation}
\medskip

\noindent
{\sl Step 2}.  Next we are going to show that the contribution of the interval $J_2$ is getting negligible
as $\delta\to 0$. To this end we prove that
\begin{equation}\label{step2-1}
q(t^{\alpha/2}v, st^\alpha) \leq  C (v) t^{-\frac{\alpha d}2} \qquad \mbox{for all } \; s \in J_2
\end{equation}
with some constant $C(v)$ that might depend on $v$.  The proof relies on the representation formula for $q(x,t)$ in \eqref{v}.
 In order to extract the terms that provide the main contribution to the sum in the representation of $q(t^{\alpha/2}v, st^\alpha)$ we divide this sum into three parts:
\begin{equation}\label{step2-3}
q(t^{\alpha/2}v, st^\alpha)
= e^{-st^\alpha} \Big\{  \sum\limits_{n=1}^{st^\alpha-(st^\alpha)^{3/4}} + \sum\limits_{n=st^\alpha-(st^\alpha)^{3/4}}^{st^\alpha+(st^\alpha)^{3/4}} + \sum\limits_{n=st^\alpha+(st^\alpha)^{3/4}}^\infty \Big\} \frac{(st^\alpha)^n}{n!} a^{*n}(t^{\alpha/2}v)
\end{equation}
$$
 =  e^{-st^\alpha} \sum\limits_{n = st^\alpha - (st^\alpha)^{3/4}}^{st^\alpha + (st^\alpha)^{3/4}}  \frac{(st^\alpha)^n}{n!} a^{*n}(t^{\alpha/2}v) + O(e^{-c t^{\alpha/4}});
$$
the second relation here is a consequence of the Stirling formula.
Let us estimate $ t^{\frac{d\alpha}{2}} a^{*n}(t^{\alpha/2}v)$ for all  $n \in \big(st^\alpha-(st^\alpha)^{3/4}, st^\alpha + (st^\alpha)^{3/4} \big)$. Notice that $n \to \infty$ as $t \to\infty$ uniformly in $s\in J_3$.
Using \eqref{globest} we have
$$
t^{\frac{d\alpha}{2}}\, a^{*n}(t^{\alpha/2}v) = \frac{(st^\alpha)^{d/2}}{s^{d/2}}\,  a^{*n}\big( (st^{\alpha})^{1/2} \frac{v}{\sqrt{s}} \big)
$$
$$
= \frac{1}{s^{d/2}} n^{d/2}(1+ o(1)) \, a^{*n}\big( \sqrt{n} \frac{v(1+ o(1))}{\sqrt{s}} \big) - \frac{1}{s^{d/2}} \Psi \big(\frac{v(1+ o(1))}{\sqrt{s}}, 1 \big)
$$
$$
+ \frac{1}{s^{d/2}} \Psi \big(\frac{v(1+ o(1))}{\sqrt{s}}, 1 \big)
\to \frac{1}{s^{d/2}} \Psi \big(\frac{v}{\sqrt{s}}, 1 \big) = \Psi(v,s).
$$
Since the function $\Psi(v,s)$ is uniformly bounded for all $s \in (0, \infty)$, then we get
\begin{equation}\label{step2-4}
a^{*n}(t^{\alpha/2}v) \le B(v) t^{-\frac{d\alpha}{2}} \quad \mbox{as } \; t\to \infty
\end{equation}
for all $n \in \big(st^\alpha-(st^\alpha)^{3/4}, st^\alpha + (st^\alpha)^{3/4} \big)$.
Consequently \eqref{step2-3} together with \eqref{step2-4} imply \eqref{step2-1}.

As an immediate consequence of   \eqref{step2-1}  we obtain
\begin{equation}\label{step2-2}
\int\limits_{J_2} q( t^{\alpha/2}v, st^\alpha) W_\alpha(s) \,ds \leq  C_1\delta t^{-\frac{\alpha d}2}.
\end{equation}
This yields the required statement.

\medskip
\noindent
{\sl Step 3}. It remains to estimate the contribution of the interval $J_1$. Again we divide the sum in representation \eqref{v} into two parts:
\begin{equation*}\label{repr_qq}
q(t^{\frac\alpha2}v, st^\alpha)=
e^{-st^\alpha}\sum\limits_{n=1}^{t^{\alpha/2}}\frac{(st^\alpha)^n}{n!} a^{*n}(t^{\frac\alpha2}v)
+e^{-st^\alpha}\sum\limits_{n>t^{\alpha/2}}\frac{(st^\alpha)^n}{n!} a^{*n}(t^{\frac\alpha2}v)=\Sigma_4+\Sigma_5.
\end{equation*}
If $n\geq t^{\alpha/2}$ and $s\leq \frac14 t^{-\alpha/2}$, then after a simple computation we obtain
$$
\exp\big(-st^\alpha\big)\frac{(st^\alpha)^n}{n!}\leq \exp\big(-\kappa_5 t^{\alpha/2}\big)
$$
with some constant $\kappa_5>0$.  Then $\Sigma_5$ admits the following upper bound
\begin{equation}\label{est_sig5}
\Sigma_5 \leq C_5 \exp\big(-\kappa_5 t^{\alpha/2}\big)
\end{equation}
with a positive constant $C_5$.

\medskip

We turn to estimating $\Sigma_4$.
Observe that we sum up over all integer $n$ from the interval $(0,t^\frac\alpha2)$. In particular,
 $n$ need not tend to infinity as $t\to\infty$.

\begin{lemma}\label{Markov}
For any $v\in \mathbb{R}^d \backslash \{0\}$ there exist $c(v)>0$ and $C(v)>0$ such that for all $n <t^{\alpha/2}$ we have
\begin{equation}\label{ME}
a^{*n}(t^{\alpha/2}v) \leq C(v) \exp\big(-c(v) t^{\alpha/2} \big).
\end{equation}
\end{lemma}

\begin{proof}
The proof of the lemma is based on the Markov inequality.
Denote by $S_n$ the sum of $n$ i.i.d. random vectors with a common distribution
density $a(x)$. The distribution density of $S_n$ is $a^{*n}$. The notation $S^j_n$ is used for the $j$-th coordinate of $S_n$.   For $n <t^{\alpha/2}$ and any $r>0$ we have
$$
\int\limits_{|x|>rt^{\alpha/2}}a^{*n}(x)\,dx=\mathbf{P}\{|S_n|\geq rt^{\alpha/2}\}=
\mathbf{P}\big\{|S_n|\geq n\frac{rt^{\alpha/2}}n\big\}
$$
$$
\leq \sum\limits_{j=1}^d
\mathbf{P}\big\{|S^j_n|\geq \frac{n}{d} \, \frac{rt^{\alpha/2}}n\big\}.
$$
According to the Markov's inequality for the terms on the right-hand side of the last estimate the following upper bound holds:
$$
 \mathbf{P}\big\{|S^j_n|\geq n\frac{rt^{\alpha/2}}{dn}\big\}\leq \exp\Big(-\max\limits_{\gamma\in\mathbb R}
\big( \gamma\frac{rt^{\alpha/2}}{dn}-L^j(\gamma)\big)\,n\Big),
$$
where $L^j(\gamma)$ is the cumulant of $S_1^j$.
Under our assumptions on $a(\cdot)$ there is a positive constants $c_0$ such that
$$
L^j(\gamma)\leq c_0\gamma^2
$$
for all $\gamma$ such that $|\gamma|\leq 1$. Since $\frac{t^{\alpha/2}}{dn}>\frac4d$, the latter inequality implies
the following estimate
$$
\max\limits_{\gamma\in\mathbb R}
\big( \gamma\frac{rt^{\alpha/2}}{dn}-L^j(\gamma)\big)\geq  \max\limits_{|\gamma| \le 1}
\big( \gamma\frac{rt^{\alpha/2}}{dn}-L^j(\gamma)\big) \geq c_{d,r}\frac{t^{\alpha/2}}{n}
$$
with a positive constant $c_{d,r}$.
Hence, for any $r>0$ ,
\begin{equation}\label{oi_bouu}
\int\limits_{|x|>rt^{\alpha/2}}a^{*n}(x)\,dx\leq \exp\big(-c_{d,r}t^{\alpha/2}\big).
\end{equation}
Combining this estimate with the estimate $a(x)\leq Me^{-b|x|}$ that is granted by our assumptions on $a$,
and writing $a^{*(n+1)}=a^{*n}\ast a$, one can show in the standard way that
$$
a^{*(n+1)}(t^{\alpha/2}v)\leq C(v) \exp\big(-c(v) t^{\alpha/2}\big).
$$
Indeed,  by \eqref{lt} and \eqref{oi_bouu}
$$
a^{*(n+1)}(t^{\alpha/2}v)=\int\limits_{\mathbb R^d}a^{*n}(y)\,a(t^{\alpha/2}v-y)\,dy.
$$
$$
\leq \int\limits_{|y|\geq \frac12t^{\alpha/2}|v|}a^{*n}(y)\,a(t^{\alpha/2}v-y)\,dy
+\int\limits_{|y|\geq \frac12t^{\alpha/2}|v|}a^{*n}(t^{\alpha/2}v-y)\,a(y)\,dy
$$
$$
\leq \|a\|_{L^\infty}\big(e^{-c(v)t^\frac\alpha2}+ e^{-bc(v)t^\frac{\alpha p}2}\big)
$$
\end{proof}

Inequality \eqref{ME} immediately implies the following upper bound
$$
\Sigma_4\leq \exp(-ct^{\alpha/2})
$$
Combining it with \eqref{est_sig5} yields
\begin{equation}\label{est_contr_j1}
\int\limits_{J_1}q(t^{\alpha/2}v, st^\alpha)\,ds\leq \exp(-ct^{\alpha/2}).
\end{equation}

\medskip\noindent
Finally, from \eqref{contr_goodarea}, \eqref{hvostik_pred},  \eqref{step2-2} and \eqref{est_contr_j1} we deduce that
$$
p(t^{\alpha/2}v,t)=  t^{-\frac{d\alpha}2}\int\limits_0^\infty W_\alpha(s) \Psi(v,s)\,ds\,\big(1+o(1)\big),
$$
where $o(1)$ tends to zero as $t\to\infty$.
\end{proof}

\section{Moderate deviations region.}
\label{ss_mo_dev}

In this section we consider the region $t^{\frac\alpha2}\ll|x|\ll t$. The name "moderate deviations region" is related to the fact that studying the large time behaviour of $p(x,t)$ in this region relies on the asymptotic formulae for $q(x,\cdot)$ in the region
of moderate deviations.   For presentation simplicity we assume
that
\begin{equation}\label{rel_moddev}
  x=v t^{\beta}(1+o(1)) \qquad \hbox{with }\beta\in \big( \textstyle{\frac{\alpha}{2}}, 1\big),
\end{equation}
here $o(1)$ tends to zero as $t\to\infty$.

\begin{theorem}\label{t_umerennost}
Let relation \eqref{rel_moddev} hold with  $\beta \in \big(\textstyle{\frac{\alpha}{2}}, 1\big)$. Then, as $t\to\infty$,
\begin{equation}\label{MD-Kbeta}
p(x,t) = \exp\Big\{-  K_v  t\big.^{\frac{2\beta - \alpha}{2-\alpha}}(1+o(1))\Big\}
\end{equation}
with $K_v = c_3(\alpha) K$,  $c_3(\alpha)=(2-\alpha)\alpha\big.^\frac\alpha{2-\alpha}$, $ K= \big(\frac12 (\sigma^{-1} v, v) \big)\big.^{\frac1{2-\alpha}}$.
\end{theorem}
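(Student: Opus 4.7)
The plan is to apply a Laplace/saddle-point analysis to the integral representation \eqref{fundamentalsolution2}, using the moderate-deviations asymptotics of $q(x,\tau)$ established in \cite{GKPZ} and the explicit asymptotics \eqref{Phi} of $W_\alpha$. Since only leading exponential behaviour (with $o(1)$ in the exponent) is required, one needs only control the rate function, not the prefactor.

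First I would write
$$
p(x,t)=\int_0^\infty W_\alpha(s)\,q(x,t^\alpha s)\,ds.
$$
Set $K_0=\tfrac12(\sigma^{-1}v,v)$ and $\tau=t^\alpha s$. For $x=vt^\beta(1+o(1))$ with $\beta\in(\alpha/2,1)$ the ratio $|x|^2/\tau=K_0 t^{2\beta-\alpha}/s$ will be the critical quantity. I would use the Gaussian asymptotic from \cite{GKPZ},
$$
q(x,\tau)=\tau^{-d/2}\exp\Bigl(-\tfrac{(\sigma^{-1}x,x)}{2\tau}(1+o(1))\Bigr),
$$
valid uniformly in the moderate deviations regime $\tau^{1/2}\ll|x|\ll\tau$, together with the asymptotic \eqref{Phi} $W_\alpha(s)\sim c_1(\alpha)s^{\frac1{2(1-\alpha)}-1}\exp(-c_2(\alpha)s^{1/(1-\alpha)})$ for large $s$. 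Substituting, the integrand in the relevant $s$-range has the form
$$
\exp\!\Bigl\{-\tfrac{K_0 t^{2\beta-\alpha}}{s}-c_2(\alpha)s^{\frac1{1-\alpha}}+\text{lower order}\Bigr\}\cdot(\text{polynomial in }s,t).
$$

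Next I would locate the saddle point by minimising $F(s)=K_0 t^{2\beta-\alpha}/s+c_2(\alpha)s^{1/(1-\alpha)}$. Setting $F'(s_*)=0$ yields
$$
s_*=\Bigl(\tfrac{(1-\alpha)K_0}{c_2(\alpha)}\Bigr)^{\frac{1-\alpha}{2-\alpha}}\!t^{\frac{(2\beta-\alpha)(1-\alpha)}{2-\alpha}},
$$
and a direct computation using $c_2(\alpha)=(1-\alpha)\alpha^{\alpha/(1-\alpha)}$ gives
$$
F(s_*)=(2-\alpha)\alpha^{\alpha/(2-\alpha)}K_0^{1/(2-\alpha)}\,t^{\frac{2\beta-\alpha}{2-\alpha}}=K_v\,t^{\frac{2\beta-\alpha}{2-\alpha}}.
$$
I would then verify that $s_*$ lies in the validity range of the Gaussian asymptotic for $q(\cdot,t^\alpha s)$: a short exponent computation shows $|x|^2/(s_*t^\alpha)\sim t^{(2\beta-\alpha)/(2-\alpha)}\to\infty$ and $|x|/(s_*t^\alpha)\sim t^{-\alpha(1-\beta)/(2-\alpha)}\to0$, so we are indeed in the moderate-deviation regime of $q$.

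To complete the proof I would split the integration domain as $J_0=(0,t^{-\alpha}r(t))$, $J_*=$ a shrinking neighbourhood of $s_*$ (wide enough to capture the saddle up to $o(1)$ in the exponent), and the complement $J_\pm$. On $J_*$ the Laplace method yields the lower bound and a matching upper bound of the form $\exp\{-K_v t^{(2\beta-\alpha)/(2-\alpha)}(1+o(1))\}$ (the polynomial prefactor is absorbed into $o(1)$). On $J_-=\{s<s_*/R\}$, the term $K_0 t^{2\beta-\alpha}/s$ dominates and produces a strictly larger exponent; for the very small values $s\lesssim t^{-\alpha}$, where the Gaussian asymptotic for $q$ fails, I would instead use the trivial bound $q(x,t^\alpha s)\le e^{-c|x|}$ coming from Lemma \ref{Markov}-type arguments as in \cite{GKPZ} together with $W_\alpha$ bounded near $0$. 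On $J_+=\{s>Rs_*\}$ the super-exponential tail \eqref{Phi} of $W_\alpha$ dominates and again gives a strictly worse exponent, while the supremum $\sup_x q(x,\tau)\le C\tau^{-d/2}$ provides the necessary uniform control on $q$.

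The main obstacle is the precise localisation step: one must argue that the saddle region $J_*$ can be taken narrow enough that the Gaussian approximation to $q$ remains accurate (in the sense that its logarithm is correct up to $o(t^{(2\beta-\alpha)/(2-\alpha)})$), yet wide enough for the Laplace expansion of $F$ around $s_*$ to reproduce the leading exponential. This requires uniformity of the moderate-deviation asymptotic of $q$ from \cite{GKPZ} over the range of $\tau=t^\alpha s$ attained in $J_*$, which is the only nontrivial input; everything else is a direct saddle-point calculation.
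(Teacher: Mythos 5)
Your proposal is correct and follows essentially the same route as the paper: both localize at the saddle $s_*$ of the rate function $F(s)=K_0t^{2\beta-\alpha}/s+c_2(\alpha)s^{1/(1-\alpha)}$ (your $s_*$ matches the paper's $\xi_0$ in \eqref{sel_xi}, and your evaluation $F(s_*)=K_v t^{(2\beta-\alpha)/(2-\alpha)}$ is the paper's `$\min_s f(s,t)$' computation), and both obtain the lower bound by integrating the asymptotic of $W_\alpha q$ over a unit neighbourhood of $s_*$, using \cite[Theorem 3.1]{GKPZ}. The only real difference is in the upper bound away from and near the saddle: you propose to invoke the moderate-deviation asymptotic for $q(x,\tau)$ directly (together with $\sup_x q\le C\tau^{-d/2}$ and a Markov-type bound for $s\lesssim t^{-\alpha}$), which requires the uniformity you flag as the key obstacle; the paper instead returns to the series representation \eqref{v} of $q$, splits the $k$-sum, and controls the central range via the pointwise estimate $a^{*k}(x)=\exp\{-\tfrac12(\sigma^{-1}x,x)k^{-1}(1+o(1))\}$ from \cite[Lemma 3.14]{GKPZ}, precisely because that lemma supplies the needed uniformity without a separate argument. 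So your plan is sound, and the ``obstacle'' you identify is exactly what the paper's more granular $J_2^1,J_2^2,J_2^3$ decomposition and convolution-power estimates are designed to resolve.
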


\begin{proof}
We first prove a lower bound. To this end we let
\begin{equation}\label{sel_xi}
\xi_0=\alpha\big.^{-\frac\alpha{2-\alpha}}\big(\frac12(\sigma^{-1} v, v)\big)\big.^\frac{1-\alpha}{2-\alpha} t\big.^{(2\beta - \alpha)\frac{1-\alpha}{2-\alpha}}.
\end{equation}
According to \cite[Theorem 3.1]{GKPZ}, for all $\xi\in [\xi_0-1,\xi_0+1]$ we have
$$
q(x,t^\alpha\xi)=\exp\Big(- \frac{(\sigma^{-1} x, x)}{2t^\alpha\xi_0}(1+o(1))\Big),
$$
where $o(1)$ tends to zero, as $t\to\infty$, uniformly in $\xi\in [\xi_0-1,\xi_0+1]$.
Combining this relation with \eqref{sel_xi} and the first formula in \eqref{Phi}, after straightforward
computations we obtain
$$
W_\alpha(\xi)q(x,t^\alpha\xi)=
\exp\big\{-c_3(\alpha)\big(\frac12 (\sigma^{-1} v, v) \big)\big.^\frac1{2-\alpha} t\big.^{\frac{2\beta - \alpha}{2-\alpha}} (1+o(1))\big\}
$$
uniformly in $\xi\in [\xi_0-1,\xi_0+1]$.  Integrating the last relation yields the desired
lower bound.

\bigskip

To prove the upper bound for $p(x,t)$ we divide the integration domain into three parts:
$$
J_1 = (0, t^{\beta - \alpha}), \quad J_2 = (t^{\beta - \alpha}, t^{2\beta - \alpha}), \quad J_3 = (t^{2 \beta - \alpha}, \infty),
$$
and  show that the second interval $J_2$ provides the main contribution to the integral in \eqref{fundamentalsolution2}.
We have
\begin{equation}\label{UB-MD-0}
p(x,t) = \int\limits_{J_1} W_\alpha(s) q\big(x, st^\alpha\big)\,ds + \int\limits_{J_2} W_\alpha(s) q\big(x, st^\alpha\big)\,ds + \int\limits_{J_3} W_\alpha(s) q\big(x, st^\alpha\big)\,ds.
\end{equation}

Our first aim is to calculate  the second integral on the right-hand side in \eqref{UB-MD-0}. To this end we split interval $J_2$ into three parts:
$$
J_2^{1} = (t^{\beta - \alpha}, t^{\gamma_1}), \quad J_2^{2} = (t^{\gamma_1}, t^{2\beta - \alpha-\gamma_2}), \quad J_2^{3} = (t^{2 \beta - \alpha-\gamma_2},  t^{2\beta - \alpha}),
$$
if $\beta\leq\alpha$, and
$$
J_2^{1} = (t^{\beta - \alpha}, t^{\beta - \alpha + \gamma_1}), \quad J_2^{2} = (t^{\beta - \alpha + \gamma_1}, t^{2\beta - \alpha-\gamma_2}), \quad J_2^{3} = (t^{2 \beta - \alpha-\gamma_2},  t^{2\beta - \alpha}),
$$
if $\beta>\alpha$.
We then show that for sufficiently small $\gamma_1, \gamma_2 >0$  the contribution of the corresponding integrals over $J_2^{1}$ and $J_2^{3}$  do not exceed
$ o \big( e^{-t^{\frac{2\beta - \alpha}{2-\alpha}}} \big)$ as $t \to \infty$.
Indeed, considering the asymptotics of $W_\alpha(s)$ in  \eqref{Phi} we conclude that on interval  $J_2^{3}$ the following upper bound holds:
$$
W_\alpha(s) \le C_1  t^{m(\alpha,\beta)}   e^{-c_2(\alpha) t^{\frac{2\beta-\alpha - \gamma_2}{1-\alpha}}}, \quad s\in J_2^{3},
$$
with some $m(\alpha,\beta)>0$.   For $0< \gamma_2 < \frac{2\beta-\alpha}{2-\alpha}$ this yields
\begin{equation}\label{UB-MD-1}
\int\limits_{J_2^3} W_\alpha(s) q\big(x, st^\alpha\big)\,ds = o \big( e^{-t^{\frac{2\beta - \alpha}{2-\alpha}}} \big).
\end{equation}

We turn to the interval $J_2^{1}$. If $\frac{\alpha}{2} < \beta \leq\alpha$ then letting
\begin{equation}\label{gamma1-1}
0<\gamma_1 < \frac{(2\beta - \alpha)(1-\alpha)}{2-\alpha}
\end{equation}
we obtain
$$
W_\alpha(s) \le C_2, \quad q\big(r t^\beta, st^\alpha \big) \le e^{-c(r)t^{2\beta-\alpha-\gamma_1}} = o \big( e^{-t^{\frac{2\beta - \alpha}{2-\alpha}}} \big)
$$
for all $s \in J_2^{1} = (t^{\beta - \alpha}, t^{\gamma_1})$.
Analogously, if $\alpha< \beta <1$, then we choose $\gamma_1$ such that
\begin{equation}\label{gamma1-2}
0 < \gamma_1 < \frac{ \alpha (1-\beta)}{2-\alpha}.
\end{equation}
In this case
$$
W_\alpha(s) q\big(v t^\beta, st^\alpha \big) = o \big( e^{-t^{\frac{2\beta - \alpha}{2-\alpha}}} \big),
$$
and consequently
\begin{equation}\label{UB-MD-2}
\int\limits_{J_2^1} W_\alpha(s) q\big(x, st^\alpha\big)\,ds = o \big( e^{-t^{\frac{2\beta - \alpha}{2-\alpha}}} \big).
\end{equation}
It remains to compute the asymptotics of the integral over $J_2^2$.  For $\beta>\alpha$ it takes the form
\begin{equation}\label{UB-MD-3}
\int\limits_{J_2^2} W_\alpha(s) q\big(x, st^\alpha\big)\,ds
= \int\limits_{t^{\beta - \alpha+\gamma_1}}^{t^{2\beta - \alpha-\gamma_2}} W_\alpha(s) q\big(x, st^\alpha\big)\,ds
\end{equation}
The case when  $ J_2^{2} = (t^{\gamma_1}, t^{2\beta - \alpha-\gamma_2})$ ($\beta < \alpha$)  can be considered in a similar way.

Since for all $s \in J_2^2$ we have $s>t^{\beta - \alpha+\gamma_1}$, the function $W_\alpha(s)$ meets the first asymptotics in \eqref{Phi} as $s \in J_2^2$.
Recalling that $x=v t^{\beta}(1+o(1))$,  we represent  $ q\big(x, st^\alpha\big)$ as a sum
\begin{equation}\label{UB-MD-4}
q(v t^{\beta}, s t^\alpha) = e^{-st^\alpha} \left\{ \sum\limits_{k=1}^{t^{(\beta+\gamma_1)}} \ + \ \sum\limits_{k=t^{(\beta+\gamma_1)}+1}^{(1+\delta) s t^\alpha} \ + \ \sum\limits_{k> (1+\delta) s t^\alpha }    \right\} \frac{(st^\alpha)^k}{k!} a^{*k}(v t^{\beta})
\end{equation}
where $\delta>0$ is a sufficiently small positive constant. Notice that the upper summation limit in the second sum on the right-hand side and the lower summation limit in the third sum depend on $s$ that belongs to the interval $ J_2^2 = (t^{\beta - \alpha+\gamma_1}, \ t^{2\beta - \alpha-\gamma_2})$.

We start by estimating the first sum in \eqref{UB-MD-4}.
Using the Markov inequality  in the same was as in the proof of Lemma \ref{Markov} above
we obtain
\begin{equation}\label{UB-MD-5}
\int\limits_{|x|>vt^{\beta}} a^{*k}(x)\,dx  \leq C_d \exp \Big\{ -\max\limits_{\gamma\in\mathbb R}
\big( \gamma\frac{vt^{\beta}}{dk}-L^j(\gamma)\big)\, k \Big\}.
\end{equation}
The maximum on the right-hand side here admits the lower bound
$$
\max\limits_{\gamma\in\mathbb R}
\big( \gamma\frac{v t^{\beta}}{dk}-L^j(\gamma)\big) \ge c_{d,v} \big( \frac{t^\beta}{k} \big)^2 \ge  \frac{t^{\beta- \gamma_1}}{k}
$$
with a constant $c_{d,v}>0$. This yields the following estimate
$$
\int\limits_{|x|>vt^{\beta}} a^{*k}(x)\,dx\leq \exp \big\{ -c_{d,v}t^{\beta - \gamma_1}\big\},
$$
which is valid for any  $k \le t^{\beta+\gamma_1}$.
Combining this estimate with the estimate $a(x)\leq Me^{-b|x|}$ and \eqref{gamma1-2} we conclude that
\begin{equation}\label{UB-MD6}
a^{*(k+1)}( v t^{\beta}) \leq e^{ -c_1 t^{\beta - \gamma_1}} = o \big( e^{-t^{\frac{2\beta - \alpha}{2-\alpha}}} \big) \quad \mbox{for all } \; k \le t^{\beta+\gamma_1}.
\end{equation}
The inequality \eqref{UB-MD6} combined with a trivial inequality
\begin{equation*}\label{sumexp}
e^{-s t^\alpha} \sum\limits_{k=1}^{t^{\beta+\gamma_1}} \frac{(st^\alpha)^k}{k!} \ < \ 1,
\end{equation*}
implies the upper bound for the first sum in \eqref{UB-MD-4}:
\begin{equation}\label{UB-MD-5}
e^{-st^\alpha} \sum\limits_{k=1}^{t^{\beta+\gamma_1}} \frac{(st^\alpha)^k}{k!} a^{*k}(r t^{\beta}) \le C_1 e^{-c_1 t^{\beta - \gamma_1}} = o \big( e^{-t^{\frac{2\beta - \alpha}{2-\alpha}}} \big);
\end{equation}
here we assume that  $\gamma_1$ satisfies \eqref{gamma1-2}.

\medskip
The estimation of the third sum  in  \eqref{UB-MD-4}
is based on the following upper bound
$$
e^{-s t^\alpha} \frac{(st^\alpha)^k}{k!}  \le  e^{- \frac14 \delta^2 s t^{\alpha}} \le e^{-\frac14 \delta^2 t^{\beta+\gamma_1}}  = o \big( e^{-t^{\frac{2\beta - \alpha}{2-\alpha}}} \big),
$$
which is an immediate consequence of the Stirling formula and valid for any $s \ge t^{\beta - \alpha + \gamma_1}$,
$k\geq (1+\delta)st^\alpha$ and  $\delta \in (0, 1)$.
We have also used here an evident inequality $\beta > \frac{2\beta - \alpha}{2-\alpha}$.
Since
$$
\frac{(st^\alpha)^{k+1}\big/(k+1)!}{(st^\alpha)^{k}\big/k!}= \frac{st^\alpha}{(k+1)} <  \frac{st^\alpha}{(1+\delta)st^\alpha} = \frac{1}{1+\delta}
$$
for $k > (1+\delta)st^\alpha$, the third sum on the right-hand side of \eqref{UB-MD-4} can be estimated by the corresponding geometrical progression, and we finally obtain
\begin{equation}\label{UB-MD-5bis}
e^{-st^\alpha} \sum\limits_{k> (1+\delta) s t^\alpha }  \frac{(st^\alpha)^k}{k!} a^{*k}(v t^{\beta}) = o \big( e^{-t^{\frac{2\beta - \alpha}{2-\alpha}}} \big).
\end{equation}

\medskip
The estimation of the second sum in  \eqref{UB-MD-4} with $k \in (t^{\beta+\gamma_1}, \, (1+\delta) s t^\alpha)$ is based on the statement of Lemma 3.14 from \cite{GKPZ}, where the following asymptotic formula for $a^{*k}(x)$ has been justified:
\begin{equation}\label{UB-MD-6}
a^{*k} (x) \ = \ e^{- \frac12 \frac{(\sigma^{-1}x,x)}{k}(1+ \varphi(\frac{x}{k}))}, \quad \mbox{where} \;\; \varphi(\xi) \to 0 \; \mbox{as} \; \xi \to 0,
\end{equation}
provided
\begin{equation}\label{UB-MD-7}
\frac{|x|}{k} \to 0 \quad \mbox{ and } \quad \frac{|x|^2}{k} \to \infty.
\end{equation}
It is easy to see that for all  $k \in (t^{\beta+\gamma_1}, \, (1+\delta) s t^\alpha)$ and $s \in J^2_2$ conditions \eqref{UB-MD-7} are fulfilled:
$$
\frac{|x|}{k} \le C_1 \frac{t^\beta}{t^{\beta+\gamma_1}} \to 0,
\quad
\frac{|x|^2}{k} \ge C_2 t^{2\beta - \alpha -(2\beta-\alpha-\gamma_2)}  \to \infty.
$$
Therefore,  the relation
\begin{equation}\label{UB-MD-8}
a^{*k} (x) \ = \ e^{- \frac12 \frac{(\sigma^{-1}x,x)}{k}(1+ o(1))} \ = \ e^{- \frac12 (\sigma^{-1}v,v) \frac{t^{2\beta - \alpha}}{s}(1+ o(1))}
\end{equation}
holds uniformly for all  $k \in (t^{\beta+\gamma_1}, \, (1+\delta) s t^\alpha)$ as $t\to\infty$.

Combining \eqref{UB-MD-8} with the asymptotic formulae in \eqref{Phi} 
 and taking into account estimates \eqref{UB-MD-5} and \eqref{UB-MD-5bis} for the first and the third sums on the right-hand side of \eqref{UB-MD-4},
we obtain an asymptotic upper bound for the integral in \eqref{UB-MD-3}:
\begin{equation}\label{UB-MD-9}
\int\limits_{J_2^2} W_\alpha(s) q\big(x, st^\alpha\big)\,ds \le e^{-K_v \, t^{\frac{2\beta - \alpha}{2-\alpha}}(1+ o(1))},
\end{equation}
which is valid for all sufficiently large $t$. Here
$$
K_v = c_3(\alpha) K =  (2-\alpha)\alpha\big.^\frac\alpha{2-\alpha}\, \big(\frac12 (\sigma^{-1} v, v) \big)\big.^{\frac1{2-\alpha}}.
$$
It is straightforward to check that
$$
K_v \, t^{\frac{2\beta - \alpha}{2-\alpha}} \ = \ \min\limits_s f(s,t), \qquad
$$
where
$$
f(s,t) = \frac12 (\sigma^{-1}v,v) \frac{t^{2\beta - \alpha}}{s}  + c_2(\alpha)\, s^{\frac{1}{1-\alpha}}, \quad c_2(\alpha)=(1-\alpha)\alpha\big.^\frac\alpha{1-\alpha}.
$$
Notice that  ${\rm argmin} f(\cdot,t) \in J^2_2$.

From \eqref{UB-MD-1},  \eqref{UB-MD-2} and  \eqref{UB-MD-9} one can easily deduce that
\begin{equation}\label{UB-MD-9bis}
\int\limits_{J_2} W_\alpha(s) q\big(x, st^\alpha \big)\,ds \le e^{- K_v \, t^{\frac{2\beta - \alpha}{2-\alpha}}(1+ o(1))}
\end{equation}
with the  constant $ K_v = c_3(\alpha) K$ defined above.

\bigskip
Now we turn to the remaining integrals on the right-hand side  in \eqref{UB-MD-0}. It will be shown  that
\begin{equation}\label{UB-MD-10}
\int\limits_{J_1} W_\alpha(s) q\big(x, st^\alpha\big)\,ds \le O(e^{- c_1 t^\beta}) = o(e^{-t^{\frac{2\beta - \alpha}{2-\alpha}}}),
\end{equation}
and
\begin{equation}\label{UB-MD-11}
\int\limits_{J_3} W_\alpha(s) q\big(x, st^\alpha\big)\,ds \le e^{- c_3 \, t^{\frac{2\beta - \alpha}{1-\alpha}} } = o(e^{-t^{\frac{2\beta - \alpha}{2-\alpha}}}).
\end{equation}
This means in particular that these two integrals do not contribute to the principal term of the  asymptotics of $p(x,t)$.

For $s \ge t^{2\beta-\alpha}$ the asymptotic formula \eqref{Phi} implies that
$$
W_\alpha (s) \le C e^{- c_2(\alpha) t^{\frac{2\beta - \alpha}{1-\alpha}}}.
$$
Since  $q(t^\beta v,,s t^\alpha)$ is bounded for all $t\geq 1$,    we obtain  \eqref{UB-MD-11} with $ c_3 = \frac12 c_2(\alpha) $.

To estimate the integral in \eqref{UB-MD-10} 
we represent  $q \big(v t^{\beta}, st^\alpha \big)$ as a sum
\begin{equation}\label{UB-MD-12}
q(v t^{\beta}, s t^\alpha) = e^{-s t^\alpha} \left\{ \sum\limits_{k=1}^{3 t^{\beta}} \ + \  \sum\limits_{k> 3 t^\beta }    \right\} \frac{(s t^\alpha)^k}{k!} a^{*k}(v t^{\beta})
\end{equation}
For all $k \le 3 t^\beta$ by the Markov inequality in the same way as in the proof of  Lemma \ref{Markov} we have:
\begin{equation*}
\int\limits_{|x|>v t^{\beta}} a^{*k}(x)\,dx  \leq C_d \exp \Big\{ -\max\limits_{\gamma>0}
\big( \gamma\frac{vt^{\beta}}{dk}-L^j(\gamma)\big)\, k \Big\} \leq e^{- c_{d,v} t^\beta}.
\end{equation*}
This yields
\begin{equation*}\label{UB-MD-6}
a^{*(k+1)}( v t^{\beta}) = \int_{\mathbb{R^d}} a^{*k}(vt^\beta - z) a (z) dz \leq \tilde C_1  e^{ -  c_1 t^{\beta}}.
\end{equation*}
The second sum in \eqref{UB-MD-12} can be estimated from above by an appropriate  geometric progression. Indeed, since
for $k > 3 t^\beta$ and $s \le t^{\beta-\alpha}$ we have
$$
\frac{st^\alpha}{(k+1)} <  \frac{t^\beta}{3 t^\beta} = \frac{1}{3},
$$
then the second sum admits the following upper bound:
\begin{equation*}
e^{-st^\alpha} \sum\limits_{k> 3 t^\beta } \frac{(st^\alpha)^k}{k!} a^{*k}(v t^{\beta}) \le \tilde C_2 \frac{(st^\alpha)^{3 t^\beta}}{(3 t^\beta)!} \le  \tilde C_2 e^{-c_2 t^\beta}
\end{equation*}
with $c_2 = 3(\ln 3 -1)$.

\medskip
The relations in \eqref{UB-MD-9} and \eqref{UB-MD-10} - \eqref{UB-MD-11}  yield the desired estimate from above.
\end{proof}

\section{Large deviations region.}
\label{ss_la_dev}

In this section we consider the region of large deviations. Namely, we suppose here that $x=vt(1+o(1))$,
where $v\in \mathbb R^d\setminus\{0\}$.

For the reader convenience we recall here some definitions and statements from \cite{GKPZ}. The notation $I(v)$, $v\in\mathbb R^d$,
is used for the Legendre transform of $L(\cdot)$,  $I(v)=\max\limits_{\gamma\in\mathbb R^d}\big(\gamma\cdot v-
L(\gamma)\big)$.   Under our assumptions on $a(\cdot)$  the function $I$ is smooth and strictly convex in $\mathbb R^d$.
Moreover, $I(0)=0$,  $I(v)>0$ for all $v\in\mathbb R^d\setminus\{0\}$, and
\begin{equation}\label{jurkiy_rost}
\lim\limits_{|v|\to\infty} \frac{I(v)}{|v|}=+\infty.
\end{equation}
The equation
$$
\log\xi=I(\xi v)-\xi v\cdot \nabla I(\xi v), \xi\in\mathbb R^+,
$$
has a unique solution. It is denoted by $\xi_v$.
A function $\Phi(v)$, $v\in\mathbb R^d$, is defined by
$$
\Phi(v)=1-\frac1{\xi_v}\big(1+\log\xi_v-I(\xi_v v)\big).
$$
Then $\Phi$ is a smooth convex function such that $\Phi(0)=0$, $\Phi(v)>0$ if $v\not=0$, and
\begin{equation}\label{jurkiy_rost_snova}
\lim\limits_{|v|\to\infty} \frac{\Phi(v)}{|v|}=+\infty.
\end{equation}

\bigskip
 In order to formulate our results we introduce a function
\begin{equation}\label{defin_F}
F_v(\eta)=c_2(\alpha)\eta\big.^{\frac1{1-\alpha}}+\Phi\Big(\frac v\eta\Big)\eta
\end{equation}
and define
\begin{equation}\label{defin_r_ot_s}
\eta(v)=\mathrm{argmin} F_v(\eta),\qquad \eta\geq 0.
\end{equation}
Since $\Phi(\cdot)$ is a convex function, $F_v(\cdot)$ is a strictly convex function on $(0,+\infty)$.
Due to \eqref{jurkiy_rost_snova} we have
$$
\lim\limits_{\eta\to0} F_v(\eta)=+\infty,\qquad \lim\limits_{\eta\to +\infty} F_v(\eta)=+\infty
$$
for each $v\in \mathbb R^d\setminus\{0\}$.
Consequently, $\eta(v)$ is a well defined function on  $\mathbb R^d\setminus\{0\}$.

Denote
\begin{equation}\label{def_f_tt}
{\mathtt F}(v)=\min\limits_{\eta>0}F_v(\eta)=F_v(\eta(v)).
\end{equation}
\begin{theorem}\label{t_largedev}
Assume that   $x=vt(1+o(1))$ as $t\to\infty$ for some $v\in \mathbb R^d\setminus\{0\}$. Then, as $t\to\infty$,
\begin{equation}\label{asy_la_dev}
p(x,t)=\exp\big(-{\mathtt F}(v)t(1+o(1))\big).
\end{equation}
\end{theorem}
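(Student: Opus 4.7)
The plan is to reduce the problem to a Laplace-method evaluation of the integral representation \eqref{fundamentalsolution2}. Starting from
$$p(x,t) = \int_0^\infty W_\alpha(s)\, q(x, t^\alpha s)\, ds,$$
I would make the change of variable $s=\eta\, t^{1-\alpha}$, which sets the scales so that both factors in the integrand become exponentials in $t$. Namely, for $\eta$ bounded away from $0$ and $\infty$, the large $s$ asymptotics in \eqref{Phi} give
$$W_\alpha(\eta t^{1-\alpha}) = \exp\bigl\{-c_2(\alpha)\,\eta^{\frac1{1-\alpha}}\,t\,(1+o(1))\bigr\}$$
up to polynomial prefactors. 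Simultaneously, with $\tau = t^\alpha s = \eta t$ and $x=vt(1+o(1))$, the large deviation asymptotics for $q$ proved in \cite{GKPZ} (Section on the large deviation region $|x|\sim t$) yield
$$q\bigl(vt(1+o(1)),\eta t\bigr) = \exp\bigl\{-\eta t\, \Phi\bigl(\tfrac v\eta\bigr)\,(1+o(1))\bigr\},$$
where $\Phi$ is the function defined just before \eqref{defin_F}. Combining the two, the integrand takes the form $\exp\{-t\,F_v(\eta)(1+o(1))\}$ up to sub-exponential factors, with $F_v$ exactly as in \eqref{defin_F}.

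The lower bound is obtained, in direct analogy with the moderate-deviations proof, by restricting the integration to a small neighbourhood $[\eta(v)-\varepsilon,\eta(v)+\varepsilon]$ of the minimizer $\eta(v)$ defined in \eqref{defin_r_ot_s}, where the uniform asymptotics above hold; because $F_v$ is continuous with minimum value $\mathtt F(v)$, the integral over this neighbourhood is at least $\exp(-\mathtt F(v)t(1+o(1)))$.

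For the upper bound I would split $(0,\infty)$ into three pieces: a small-$\eta$ part $(0,\eta_-)$, a middle part containing $\eta(v)$ on which the combined integrand is bounded by $\exp(-t\,F_v(\eta)(1+o(1)))$, and a large-$\eta$ part $(\eta_+,\infty)$. On the middle part the bound $\exp(-t\mathtt F(v)(1+o(1)))$ follows from convexity of $F_v$. For the large-$\eta$ tail the contribution is controlled by the Wright-function tail in \eqref{Phi}, which gives an exponential decay of order $\exp(-c_2(\alpha)\eta_+^{1/(1-\alpha)} t)$; by choosing $\eta_+$ large enough (depending on $v$) this is dominated by $\exp(-\mathtt F(v)t)$, since $F_v(\eta)\to+\infty$ as $\eta\to\infty$. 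For the small-$\eta$ tail I would use the representation \eqref{v} for $q$ together with the Markov-inequality argument used in Lemma \ref{Markov} and in the moderate-deviations proof: when $\eta t = t^\alpha s$ is small compared with $t=|x|$, the convolution powers $a^{*k}(x)$ are super-exponentially small, reflecting that the Kolmogorov equation cannot move mass a distance of order $t$ in time much smaller than $t$. This gives an upper bound of the form $\exp(-c t\,\Phi(v/\eta))\le\exp(-t F_v(\eta))$ on $(0,\eta_-)$, and after minimizing, $F_v(\eta)\to+\infty$ as $\eta\to0$ because of \eqref{jurkiy_rost_snova}.

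The main technical obstacle will be propagating the large deviation asymptotic for $q(x,\tau)$ from \cite{GKPZ} \emph{uniformly} over a neighbourhood of $(vt,\eta(v) t)$ large enough to justify the Laplace method; in particular, one needs that the $o(1)$ in the exponent of $q$ is uniform for $\eta$ in a fixed compact subinterval of $(0,\infty)$ and for the perturbation $vt(1+o(1))$ of the argument. Once this uniform version is in hand, the rest is a standard Laplace/Varadhan-type argument exploiting the strict convexity of $F_v$ on $(0,\infty)$ and the growth of both $\eta\mapsto \eta^{1/(1-\alpha)}$ at $\infty$ and $\eta\mapsto \Phi(v/\eta)\eta$ at $0$, which together guarantee that $\eta(v)$ is an interior minimizer and that only its neighbourhood contributes to the exponential scale.
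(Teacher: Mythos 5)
Your proposal is correct and takes essentially the same route as the paper: rescale $\eta = st^{\alpha-1}$, identify the exponent of the integrand as $tF_v(\eta)(1+o(1))$ by combining the Wright-function tail with the large-deviation asymptotics for $q$ from \cite{GKPZ}, obtain the lower bound from a neighbourhood of the minimizer $\eta(v)$, and prove the upper bound via the same three-region split (small $\eta$ by a Cram\'er/Stirling estimate, middle via the uniform upper bound on $q$, large $\eta$ via the Wright-function tail). The only inaccuracy is your small-$\eta$ inequality $\exp(-ct\,\Phi(v/\eta))\le\exp(-tF_v(\eta))$, which omits a factor of $\eta$ and runs in the wrong direction (since $\Phi(v/\eta)\eta\le F_v(\eta)$); the intended and correct observation is that $q(x,\eta t)\le\exp(-c\,\Phi(v/\eta)\eta\,t)$ (or the $I$-based analogue the paper uses there), and this is negligible because $\Phi(v/\eta)\eta\to\infty$ as $\eta\to0$ by \eqref{jurkiy_rost_snova}.
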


\begin{proof}
We begin by proving the lower bound.
For all $s\in \big({\eta}(v)t^{1-\alpha}-1,{\eta}(v)t^{1-\alpha}+1\big)$
we have
$$
st^\alpha=\eta(v) t(1+o(1)),
$$
where $o(1)$ tends to zero as $t\to\infty$ uniformly in $s\in \big({\eta}(v)t^{1-\alpha}-1,{\eta}(v)t^{1-\alpha}+1\big)$.
According to \cite[Theorem 3.8]{GKPZ} for such $s$ the following relation holds
$$
q(x,st^\alpha)=\exp\Big(-\Phi\big(\frac v{\eta(v)}\big)\eta(v)t(1+o(1))\Big).
$$
Therefore,
$$
W_\alpha(s)q(x,st^\alpha)=\exp\Big(-\big[c_2(\alpha)(\eta(v))^{\frac1{1-\alpha}}
+\Phi\big(\frac v{\eta(v)}\big)\eta(v)\big]t(1+o(1))\Big)\quad\hbox{as }t\to\infty
$$
uniformly in
$s\in\big({\eta}(v)t^{1-\alpha}-1,{\eta}(v)t^{1-\alpha}+1\big)$.
Considering \eqref{fundamentalsolution2} and the definition of ${\mathtt F}$ in \eqref{def_f_tt} we conclude that
$$
p(x,t)\geq\exp\big(-{\mathtt F}(v)t(1+o(1))\big).
$$
This yields the lower bound in \eqref{asy_la_dev}.

\smallskip
We turn to the upper bound. Our first aim is to estimate the contribution of small $s$. According to
\eqref{jurkiy_rost} under our standing assumptions there exists $\gamma_1=\gamma_1(v)>0$ such that
for any $\gamma\leq\gamma_1$ the following inequality holds
$$
I\Big(\frac v\gamma\Big)\gamma >{\mathtt F}(v).
$$
With the help of the Stirling approximation formula, it is straightforward to show that there exists
$\gamma_0=\gamma_0(v)>0$ such that $\gamma_0<\gamma_1$ and for any $\gamma\leq\gamma_0$  we have
$$
\sum\limits_{k\geq \gamma_1 t}\frac{(\gamma t)^k}{k!}e^{-\gamma t}\leq\exp\big(\big[\gamma_1t(\log\gamma-\log\gamma_1)+(\gamma_1-\gamma)t\big]
(1+o(1))  \big)<\exp(-{\mathtt F}(v)t),
$$
where $o(1)$ tends to zero as $t\to\infty$.  Therefore, for any $s\in (0,\gamma_0t^{1-\alpha})$,
$$
\begin{array}{ccc}
q\big(x, st^\alpha\big)<\max\limits_{k\leq \gamma_1t}a^{*k}(x)+C \exp(-{\mathtt F}(v)t)\\[2mm]
\displaystyle
\leq  \max\limits_{k\leq \gamma_1t} \Big\{\exp\Big(-I\Big(\frac{vt}{k}\Big)k\Big)\Big\}+C\exp(-{\mathtt F}(v)t)\ \leq\
\exp\big\{-{\mathtt F}(v)t(1+o(1))\big\}.
\end{array}
$$
   Considering the fact that $W_\alpha$ is a bounded function we
obtain
$$
\int\limits_0^{\gamma_0t^{1-\alpha}} W_\alpha(s) q\big(x, st^\alpha\big)\,ds
\leq  C\int\limits_0^{\gamma_0t^{1-\alpha}} q\big(x, st^\alpha\big)\,ds
\leq \gamma_0t^{1-\alpha}\exp\big\{-{\mathtt F}(v)t(1+o(1))\big\}
$$
\begin{equation}\label{raz}
\leq \exp\big(-{\mathtt F}(v)t(1+o(1))\big).
\end{equation}

\medskip
Due to the first relation in \eqref{Phi} and the fact that $q(x,t)$ is bounded, there exists $\gamma_2>\gamma_1$ such that
\begin{equation}\label{dva}
\int\limits_{\gamma_2t^{1-\alpha}}^\infty W_\alpha(s) q\big(x, st^\alpha\big)\,ds\leq
\exp\big(-{\mathtt F}(v)t\big)
\end{equation}

It remains to estimate the contribution of the interval $s\in(\gamma_0t^{1-\alpha}, \gamma_2t^{1-\alpha})$. Denote
$st^{\alpha}=\gamma t$. Notice that  $\gamma\in(\gamma_0,\gamma_2)$. Then
by \cite[Theorem 3.4]{GKPZ}
$$
q(x,st^{\alpha})\leq \exp\big\{-\Phi\Big(\frac x{st^{\alpha}}\Big)st^{\alpha}(1+o(1))\big\}=
Ct \exp\big\{-\Phi\Big(\frac v\gamma\Big)\gamma t
(1+o(1))\big\},
$$
where $o(1)$ tends to zero as $t\to \infty$ uniformly in $\gamma\in(\gamma_0,\gamma_2)$.
Combining this relation with \eqref{Phi}, \eqref{defin_F}, \eqref{defin_r_ot_s} and \eqref{def_f_tt}, we conclude that
$$
W_\alpha(s)q(x,st^{\alpha})\leq \exp\big\{-{\mathtt F}(v)t(1+o(1))\big\}
$$
and therefore
$$
\int\limits_{\gamma_0t^{1-\alpha}}^{\gamma_2t^{1-\alpha}} W_\alpha(s) q\big(x, st^\alpha\big)\,ds
\leq \exp\big\{-{\mathtt F}(v)t(1+o(1))\big\}.
$$
Estimates \eqref{raz}, \eqref{dva} and the latter relation yield the desired upper bound.
\end{proof}

\medskip

\section{Extra large deviations region.}
\label{ss_sup_la_dev}

In the region of extra large deviation $|x|\gg t$ our asymptotic estimates are not as sharp as in the other regions.
The following statement holds.

\begin{theorem}\label{t_extra_la}
Assume that $|x|\gg t$. Then there exists  a positive constant  ${\mathtt{c}_+}>0$
such that
\begin{equation}\label{sup_la_bou}
 p(x,t)\leq
\exp\big\{-\mathtt{c}\big._+|x|\,\big(\log\big|\textstyle{\frac{x}{t}}\big|\big)^{\frac{p-1}p}\big\}
\end{equation}
for all sufficiently large $t$.
\end{theorem}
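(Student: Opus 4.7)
The plan is to combine the subordination representation \eqref{fundamentalsolution2} with the extra large deviations bound for the heat kernel $q$ obtained in \cite{GKPZ}, and then to perform a Laplace-type analysis of the resulting integral. I would start from
$$p(x,t)=\int_0^\infty W_\alpha(s)\, q(x, t^\alpha s)\, ds,$$
set $\mu:=|x|/t\to\infty$ and recall that \cite{GKPZ} provides, for $|y|\gg\tau$, an upper bound of the form $q(y,\tau)\leq \exp\{-c_\ast|y|(\log(|y|/\tau))^{(p-1)/p}\}$ with some $c_\ast>0$ depending only on $a$. The natural splitting of the integration interval is at $s_0:=|x|/(e\,t^\alpha)=\mu t^{1-\alpha}/e$: for $s\leq s_0$ one has $t^\alpha s\leq |x|/e$, so that the $q$-estimate applies, while for $s>s_0$ one appeals to the super-exponential tail of $W_\alpha$.

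For $s\in(0,s_0]$, combining the bound for $q$ with the large-$s$ asymptotics $W_\alpha(s)\leq C\exp\{-c_2(\alpha)s^{1/(1-\alpha)}\}$ from \eqref{Phi} majorises the integrand, for $s$ sufficiently large, by $C\exp\{-G(s;x,t)\}$ with
$$G(s;x,t):=c_\ast|x|\bigl(\log(|x|/(t^\alpha s))\bigr)^{(p-1)/p}+c_2(\alpha)s^{1/(1-\alpha)}.$$
Changing variable to $u=\log(|x|/(t^\alpha s))$ and solving $\partial_u G=0$ leads to the transcendental relation $e^{u/(1-\alpha)}\sim \mathrm{const}\cdot \mu^{\alpha/(1-\alpha)}u^{1/p}$, whose solution is $u_\ast=\alpha\log\mu(1+o(1))$, corresponding to $s_\ast=\mu^{1-\alpha}t^{1-\alpha}(1+o(1))$. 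For $\mu$ large this $s_\ast$ sits comfortably inside $(0,s_0]$ and is large enough for the $W_\alpha$-tail bound to apply. At the critical point the second term of $G$ turns out to be of order $|x|$, hence subdominant, giving
$$G(s_\ast;x,t)=c_\ast\alpha^{(p-1)/p}|x|(\log\mu)^{(p-1)/p}(1+o(1)).$$
Since $G(\,\cdot\,;x,t)$ is convex in the relevant range, estimating the integral by the maximum of its integrand (up to a polynomial-in-$t$ prefactor that is absorbed into the exponential) yields the desired bound on $\int_0^{s_0} W_\alpha\, q\,ds$ with any constant $\mathtt{c}_+\leq\tfrac12 c_\ast\alpha^{(p-1)/p}$.

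For $s\geq s_0$ I would simply use boundedness of $q$ together with the super-exponential tail of $W_\alpha$ to obtain
$$\int_{s_0}^\infty W_\alpha(s)\, q(x, t^\alpha s)\,ds\leq C\exp\bigl\{-c_2(\alpha)s_0^{1/(1-\alpha)}(1+o(1))\bigr\}=\exp\bigl\{-c\mu^{1/(1-\alpha)}t(1+o(1))\bigr\}.$$
Since $\mu^{\alpha/(1-\alpha)}\gg(\log\mu)^{(p-1)/p}$ as $\mu\to\infty$, this tail contribution is strictly dominated by the target bound.

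The main technical obstacle I anticipate is the joint verification that $s_\ast$ lies simultaneously in the regime where the extra large deviations estimate for $q$ is valid (i.e.\ $|x|/(t^\alpha s_\ast)\to\infty$) and where the super-exponential asymptotics of $W_\alpha$ applies, together with a uniform control of the Laplace expansion around $s_\ast$; the log inside an exponential makes this a slightly delicate verification rather than a routine one. A secondary technicality is the small-$s$ region where $W_\alpha$ is only bounded, but there $u=\log(|x|/(t^\alpha s))$ is very large so the first term of $G$ already exceeds the target by a polynomial factor in $\mu$ and direct termwise integration suffices. Combining the three contributions yields \eqref{sup_la_bou}.
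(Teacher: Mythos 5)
Your proposal is correct, but it follows a different route from the paper's own proof. You use a Laplace-type optimization: you combine the $q$-bound and the $W_\alpha$-tail into a single exponent $G(s;x,t)$, locate its critical point $s_\ast$ via the transcendental relation $e^{u/(1-\alpha)}\sim\mathrm{const}\cdot\mu^{\alpha/(1-\alpha)}u^{1/p}$, and evaluate $G$ there to identify the (near-sharp) constant $c_\ast\alpha^{(p-1)/p}$. The paper instead does a cruder three-interval split at $s=1$ and at $s_0:=t^{1-\alpha}(|x|/t)^{1-\alpha/2}$, and on each piece lets a single factor do the work: on $(0,1)$ and on $(1,s_0)$ Proposition \ref{p_susula} alone, combined with the elementary inequality $\log(|x|/(st^\alpha))\geq\tfrac\alpha2\log(|x|/t)$ valid there, delivers the rate $\exp\{-c_5|x|(\log(|x|/t))^{(p-1)/p}\}$; on $(s_0,\infty)$ the tail of $W_\alpha$ alone gives $\exp\{-c_2(\alpha)|x|(|x|/t)^{\alpha/(2(1-\alpha))}\}$, which polynomially beats the logarithmic target. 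The paper's split point $t^{1-\alpha}(|x|/t)^{1-\alpha/2}$ is chosen precisely so that both crude bounds are simultaneously at least as strong as the target at that boundary, and so no saddle-point optimization, convexity argument, or uniform Laplace error control is needed. Your approach buys a more explicit (and plausibly tighter) constant, at the cost of the technical verifications you rightly flag — locating $s_\ast$ inside the joint regime $|x|/(t^\alpha s_\ast)\to\infty$ and $s_\ast\to\infty$, and controlling the expansion uniformly — none of which arise in the paper's more elementary decomposition. Since the theorem only asks for \emph{some} $\mathtt{c}_+>0$, the extra precision is not needed here; one small point worth tightening in your write-up is the claim $s_\ast=\mu^{1-\alpha}t^{1-\alpha}(1+o(1))$: from $u_\ast=\alpha\log\mu(1+o(1))$ you really get $s_\ast=\mu^{1-\alpha+o(1)}t^{1-\alpha}$, where the $o(1)$ sits in the exponent; this is harmless for the final estimate but should be stated carefully.
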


\begin{proof}
Our analysis relies again on formula \eqref{fundamentalsolution2}.
We consider separately three intervals: $(0,\infty)=(0,1)\cup \big(1, t^{1-\alpha}\big(\frac{|x|}t\big)^{1-\frac\alpha2}\big) \cup \big(t^{1-\alpha}\big(\frac{|x|}t\big)^{1-\frac\alpha2}, \infty\big)=I_1\cup I_2\cup I_3$.
The fact that the contribution of $s\in(0,1)$ does not exceed the right-hand side of \eqref{sup_la_bou}
is a consequence of Proposition \ref{p_susula} in Appendix. Indeed, since $st^\alpha \ll |x|$ for $s\in (0,1)$, then by
Proposition \ref{p_susula} we obtain
$$
q(x, st^\alpha) \le \exp \big\{  - \mathtt{c}\big._+|x|\,\big(\log\big|\textstyle{\frac{x}{t^\alpha}}\big|\big)^{\frac{p-1}p} \big\}
$$
for all $s \in (0,1)$.

%
For $s\in I_2$ we have $t^\alpha<st^\alpha<\big(\frac{|x|}t\big)^{-\frac\alpha2}|x|\ll |x|$. According to
Proposition \ref{p_susula}, the following estimate holds:
$$
q(x,st^\alpha)\leq \exp\big\{-c_4|x|\,\big[\log\big({\textstyle\frac{|x|}{st^\alpha}}\big)\big]^\frac{p-1}{p}\big\}\leq
\exp\big\{-c_4|x|\,\big[\log\big({\textstyle\frac{|x|}{|x|}\big(\frac{|x|}t\big)^\frac\alpha2}\big)\big]^\frac{p-1}{p}\big\}
$$
$$
\leq \exp\big\{-c_5|x|\,\big[\log\big({\textstyle\frac{|x|}t}\big)\big]^\frac{p-1}{p}\big\}
$$
for some $c_5>0$ and for all sufficiently large $t$ uniformly in $s\in I_2$.
Then
\begin{equation}\label{contri_i1}
\int_{I_2}W_\alpha(s)q(x,t^\alpha s)\,ds\leq \exp\big\{-c_6|x|\,\big[\log\big({\textstyle\frac{|x|}{t}}\big)\big]^\frac{p-1}{p}\big\}.
\end{equation}
In order to estimate the contribution of the interval
$I_3$ we first obtain an upper bound for $W_\alpha(s)$ with $s\in I_3$:
$$
W_\alpha(s)\leq \exp\big\{-c_2(\alpha)t^{-\frac{\alpha}{1-\alpha}}|x|^{\frac1{1-\alpha}}
\big({\textstyle \frac{|x|}t}\big)^{-\frac{\alpha}{2(1-\alpha)}}\big\}
$$
$$
=\exp\big\{-c_2(\alpha)|x|
\big({\textstyle \frac{|x|}t}\big)^{\frac{\alpha}{1-\alpha}-\frac{\alpha}{2(1-\alpha)}}\big\}
\leq \exp\big\{-c_7|x|\,\big[\log\big({\textstyle\frac{|x|}{t}}\big)\big]^\frac{p-1}{p}\big\}.
$$
Therefore,
for sufficiently large $t$ we have
\begin{equation}\label{contri_i2}
\int_{I_3}W_\alpha(s)q(x,t^\alpha s)\,ds\leq \exp\big\{-c_8|x|\,\big[\log\big({\textstyle\frac{|x|}{t}}\big)\big]^\frac{p-1}{p}\big\}.
\end{equation}
To conclude, under a proper choice of a constant $\mathtt{c}_+$ the contribution of each of the intervals  $I_1$, $I_2$ and $I_3$
does not exceed the right-hand side in \eqref{sup_la_bou}. 
This yields \eqref{sup_la_bou}.
\end{proof}

\section*{ Appendix}

Here we prove several inequalities for the fundamental solution $q(x,t)$.

\begin{proof}[Proof of Proposition \ref{p_missedbound}]
We begin with the upper bound.
In the region $\{|x|\leq t^\frac12\log t\}$ we can use  the technique based on the properties of the Fourier transform $\widehat a(\cdot)$
of $a(\cdot)$. We have (see, for instance, formula (2.6) in \cite{GKPZ})
$$
q(x,t)=\int_{\mathbb R^d} e^{ix\cdot p}\big(e^{-t(1-\widehat a(p))}-e^t\big)\,dp.
$$
From this formula, considering our assumptions on $a(\cdot)$, one can easily derive the desired upper bound. We leave the details to the reader.

If $|x|\geq t^\frac12\log t$ then for any $\delta>0$ and all sufficiently large $t$ we have
$$
\exp\big\{-\delta\frac{|x|^2}2\big\}\leq t^{-\frac d2}.
$$
It was shown in the proof of \cite[Lemma 3.18]{GKPZ} that for all $k\geq |x|$ the following inequality holds:
$$
a^{*k}(x)\leq \exp\big\{ -I(\frac xk)k\big\}\leq  \exp\big\{ -\mathtt{c}\frac{|x|^2}k\big\}.
$$
Combining this inequality with the Stirling formula we conclude that for some constant $c>0$ and for all sufficiently large $t$
the following estimate holds:
$$
q(x,t)\leq \exp\big\{ -c\frac{|x|^2}t \big\}\leq t^{-\frac d2} \exp\big\{ -(c-\delta)\frac{|x|^2}t \big\}.
$$
This yields the desired upper bound.

\end{proof}

\begin{proposition}\label{p_susula}
Under our standing assumptions on $a(\cdot)$ there exists a constant $\mathtt{c}>0$ such that  in the region $\{(x,t)\,:\, t>0,\ \frac{|x|}t\gg 1\}$ the following upper bound holds:
\begin{equation}\label{ub_sula}
 q(x,t)\leq
\exp\big\{-\mathtt{c}|x|\,\big(\log\big|\textstyle{\frac{x}{t}}\big|\big)^{\frac{p-1}p}\big\}
\end{equation}
\end{proposition}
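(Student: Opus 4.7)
The plan is to use the series representation \eqref{v},
$q(x,t)=e^{-t}\sum_{k\geq 1}\frac{t^k}{k!}a^{*k}(x)$,
combined with a quantitative large-deviations bound on $a^{*k}(x)$ coming from \eqref{lt}, followed by a discrete saddle-point analysis of the Poisson weights.

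The first ingredient is the $k$-uniform pointwise estimate
\begin{equation*}
a^{*k}(x)\leq C\exp\Bigl(-c\,\frac{|x|^p}{k^{p-1}}\Bigr),\qquad k\geq 1,\ x\in\mathbb R^d,
\end{equation*}
for positive constants $c,C$ depending only on $a$. This is implicit in the proof of \cite[Lemma 3.18]{GKPZ}: by the exponential Chebyshev inequality one has $a^{*k}(x)\leq e^{-kI(x/k)}$, while \eqref{lt} forces the cumulant to satisfy $L(\lambda)\lesssim|\lambda|^{p/(p-1)}$ at infinity, so Legendre duality yields $I(v)\gtrsim|v|^p$ for large $|v|$, which translates into the displayed estimate.

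The second step is the saddle-point analysis. Substituting the bound and applying Stirling to $t^k/k!$ produces
\begin{equation*}
q(x,t)\leq C\sum_{k\geq 1}e^{h(k)},\qquad h(k)=-t+k\log(et/k)-c\,\frac{|x|^p}{k^{p-1}}+O(\log k).
\end{equation*}
The stationary equation $h'(k)=0$ reads $\log(k/t)=c(p-1)|x|^p/k^p$. Setting $\rho=|x|/t\gg 1$ and seeking a solution of the form $k^{*}=|x|/M$, this becomes $\log(\rho/M)=c(p-1)M^p$, whose leading-order solution is $M=\bigl(\log\rho/(c(p-1))\bigr)^{1/p}$. Plugging $k^{*}$ back and comparing orders, both $t$ and $k^{*}$ turn out to be of smaller order than $|x|M^{p-1}=|x|(\log\rho)^{(p-1)/p}\,(c(p-1))^{-(p-1)/p}$, since $k^{*}/(|x|M^{p-1})=1/M^p\to 0$ and $t/|x|=1/\rho\to 0$. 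This yields
\begin{equation*}
h(k^{*})=-\mathtt{c}\,|x|\bigl(\log(|x|/t)\bigr)^{(p-1)/p}\bigl(1+o(1)\bigr)
\end{equation*}
with an explicit positive constant $\mathtt{c}$.

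The last step is to pass from the largest summand to the whole series. Since $h$ is strictly concave in a neighbourhood of $k^{*}$ and strictly decreasing both far to the left and far to the right of it, splitting the range into $\{k\leq k^{*}/2\}$, $\{k^{*}/2<k<2k^{*}\}$, $\{k\geq 2k^{*}\}$ and estimating the flanks by geometric series gives $\sum_{k}e^{h(k)}\leq Ck^{*}\,e^{h(k^{*})}$; the polynomial prefactor is then absorbed into a slightly smaller $\mathtt{c}$ because the exponent itself diverges as $|x|/t\to\infty$. The main technical obstacle I anticipate is bookkeeping: verifying that the $O(\log k)$ corrections from Stirling and the pre-exponential factor in the large-deviation bound for $a^{*k}$ do not degrade the exponential rate, and separately controlling the range $k<t$ (where $\log(et/k)>0$ changes the sign of the Poisson contribution to $h$) so that its total contribution is still dominated by $e^{h(k^{*})}$. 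Because the proposition only asks for \emph{some} positive $\mathtt{c}$, there is ample slack to accommodate these corrections.
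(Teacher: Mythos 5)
Your approach is close in spirit to the paper's — both use the series representation \eqref{v} together with the estimate $a^{*k}(x)\lesssim\exp(-\varkappa|x|^p/k^{p-1})$ coming from \cite{GKPZ} — but your decomposition of the sum is a genuine saddle-point analysis, whereas the paper does something cruder and shorter. The paper simply cuts the sum at $k_0=|x|\big(\log(|x|/t)\big)^{-1/p}$: for $k\leq k_0$ it applies the convolution bound to get $a^{*k}(x)\leq\exp\{-\varkappa|x|(\log(|x|/t))^{(p-1)/p}\}$ and then bounds the Poisson weights trivially by $\sum_k t^k e^{-t}/k!\leq 1$; for $k\geq k_0$ it uses only Stirling on $t^k/k!$ (no convolution bound at all, since $k_0\gg t$), giving a matching exponential decay. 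Your approach locates the true stationary point $k^*\sim|x|(\log(|x|/t))^{-1/p}$, identifies the precise leading constant, and would produce a sharper bound than the proposition requires — at the cost of the flank-control bookkeeping you flag.

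One claim in your write-up is overstated and should be repaired if you carry it through: the $k$-\emph{uniform} bound $a^{*k}(x)\leq C\exp(-c|x|^p/k^{p-1})$ for all $k\geq1$ does not follow from $I(v)\gtrsim|v|^p$ at infinity, because for $k\gtrsim|x|$ the argument $x/k$ is $O(1)$ or small and there $I$ is only quadratic, not $p$-th power. The estimate from \cite{GKPZ} you actually have at your disposal is restricted to $1\leq k\leq\alpha_p|x|$. This is enough for your argument because $k^*\ll|x|$, but your geometric-series control of the right flank $\{k\geq 2k^*\}$ implicitly uses the (unavailable) convolution bound for large $k$; you should instead, exactly as in the paper, drop the convolution bound on the far right flank (say $k\geq\alpha_p|x|$) and rely solely on the super-exponential Poisson/Stirling decay of $t^k/k!$ there, which suffices because $\alpha_p|x|\gg t$. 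With that patch the proof goes through.
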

\begin{proof}
 We use representation \eqref{v}.
 According to estimate (3.61) in \cite{GKPZ},  there exist  constants $\alpha_p>0$ and $\varkappa>0$ such that
 for all sufficiently large $x$ and for all $k$ with $1\leq k\leq\alpha_p\,|x|$ we have
 $$
 a^{*k}(x)\leq \exp\big\{-\varkappa\frac{|x|^p}{k^{p-1}}\big\}.
 $$
 If $k$ satisfies the estimate $1\leq k\leq |x| \big(\log\big(\frac{|x|}{t}\big)\big)^{-\frac1p}$, then
 $$
 a^{*k}(x)\leq\exp\big\{- \varkappa |x|  \big[\big(\log\big(\frac{|x|}{t}\big)\big)^{\frac1p}\big]^{p-1}\big\}
 =\exp\big\{- \varkappa |x|  \big(\log\big(\frac{|x|}{t}\big)\big)^{\frac {p-1}p}\big\}.
 $$
 We also have
 $$
 \sum_{k\leq |x| \big(\log\big(\frac{|x|}{t}\big)\big)^{-\frac1p}}\frac{ t^k \, e^{-t} }{k!} \ 
 \leq 1.
 $$
 Notice that the relation $|x|\gg t$ implies $|x| \big(\log\big(\frac{|x|}{t}\big)\big)^{-\frac1p}\gg t$.
If  $k\geq |x| \big(\log\big(\frac{|x|}{t}\big)\big)^{-\frac1p}$, then, by the Stirling formula,
$$
\frac{t^k}{k!}\leq \exp\big\{-k\log\big(\frac kt\big)+k\big\}\leq\exp\big\{-\frac12|x|\big(\log\big(\frac{|x|}t\big)\big)^{-\frac1p}
\log\big(\frac{|x|}t\big)\big\}
$$
$$
\leq\exp\big\{- \frac12 |x|  \big(\log\big(\frac{|x|}{t}\big)\big)^{\frac {p-1}p}\big\}.
$$
Combining the last three estimates yields the desired inequality in  \eqref{ub_sula}.
\end{proof}
\begin{remark}{\rm
It should be noted that in the formulation of Proposition \ref{p_susula} the value of $t$
might be arbitrarily small. The only relation that matters is $\frac{|x|}{t}\gg 1$.}
\end{remark}

\medskip
Next we prove \eqref{est_goodarea}.
\begin{proposition}\label{est_gooda}
For any $\delta>0$
\begin{equation*}\label{est_goodareabis}
\lim\limits_{t\to\infty}\ \sup\limits_{s\geq\delta, \,v\in\mathbb R^d}\
\big| t^{\frac{d\alpha}2} q\big(t^{\frac\alpha2}v, st^\alpha\big)-\Psi(v,s)\big|=0.
\end{equation*}
\end{proposition}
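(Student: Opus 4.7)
The plan is to combine the uniform local CLT recalled in \eqref{globest} with Gaussian concentration of the Poisson weights $P_k(\tau):=\tau^k e^{-\tau}/k!$ appearing in the series representation \eqref{v}. I would first fix $\delta>0$ and set $\tau:=st^\alpha\geq\delta t^\alpha$, noting that $\tau\to+\infty$ uniformly in $s\geq\delta$ as $t\to\infty$. I would then split the sum $q(t^{\alpha/2}v,\tau)=\sum_k P_k(\tau)\,a^{*k}(t^{\alpha/2}v)$ into the main range $M_\tau:=\{k:|k-\tau|\leq \tau^{3/4}\}$ and its complement.

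For the tail $k\notin M_\tau$ I would use a Chernoff/Stirling estimate giving $\sum_{k\notin M_\tau}P_k(\tau)\leq e^{-c\sqrt\tau}$; combined with the uniform bound $\|a^{*k}\|_\infty\leq C\,k^{-d/2}$ (itself a direct consequence of \eqref{globest}), the tail contribution to $t^{d\alpha/2}q$ vanishes uniformly in $s\geq\delta$ and $v\in\mathbb R^d$ (the contribution of very small $k$ is absorbed into a factor $e^{-\tau/2}$, and for large $k$ the prefactor $t^{d\alpha/2}\leq\delta^{-d/2}\tau^{d/2}$ is beaten by $e^{-c\sqrt\tau}$).

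On the main range I would substitute the uniform local CLT in the form
$$
a^{*k}(t^{\alpha/2}v)=k^{-d/2}\Psi(t^{\alpha/2}v/\sqrt k,1)+k^{-d/2}\varepsilon_k(t^{\alpha/2}v/\sqrt k),\qquad \sup_w|\varepsilon_k(w)|\to 0,
$$
and use that for $k\in M_\tau$ one has $k/\tau=1+O(\tau^{-1/4})$ and $t^{\alpha/2}v/\sqrt k=(v/\sqrt s)\bigl(1+O(\tau^{-1/4})\bigr)$. Together with the scaling identity $s^{-d/2}\Psi(v/\sqrt s,1)=\Psi(v,s)$ and $\sum_{k\in M_\tau}P_k(\tau)=1+o(1)$, this would give
$$
t^{d\alpha/2}q(t^{\alpha/2}v,st^\alpha)=\Psi(v,s)+O(\tau^{-1/4})=\Psi(v,s)+O\bigl((\delta t^\alpha)^{-1/4}\bigr)
$$
uniformly in $s\geq\delta$ and $v\in\mathbb R^d$, which is the desired statement.

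The main obstacle is the uniform-in-$v$ perturbation estimate $\sup_v|\Psi(t^{\alpha/2}v/\sqrt k,1)-\Psi(v/\sqrt s,1)|=O(\tau^{-1/4})$ needed at the key step: a naive Lipschitz bound blows up as $|v|/\sqrt s\to\infty$, so one must exploit the Gaussian decay of $\Psi(\cdot,1)$ itself to convert a multiplicative perturbation of the argument into an absolute error that is small uniformly in $v$. Everything else is routine bookkeeping combining Poisson concentration with the uniform local CLT, and an analogous argument controls the residual $\varepsilon_k$ part in the main sum using $\sup_w|\varepsilon_k(w)|\to 0$ for $k\to\infty$.
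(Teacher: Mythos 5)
Your proposal is correct and follows essentially the same route as the paper's proof in the Appendix: the same three-part split of the Poisson sum around $\tau=st^\alpha$ with window $\tau^{3/4}$, the same Stirling/Chernoff estimate for the tails, and the same invocation of the uniform local CLT \eqref{globest} combined with the scaling identity $\Psi(v,s)=s^{-d/2}\Psi(v/\sqrt{s},1)$ on the central range. You are actually somewhat more careful than the text, which passes over the uniform-in-$v$ control of the perturbed argument $\sqrt{n}\,\tfrac{v}{\sqrt s}(1+o(1))$ without comment; your observation that the Gaussian decay of $\Psi(\cdot,1)$ (rather than a naive Lipschitz bound) is what turns the multiplicative $O(\tau^{-1/4})$ perturbation into an error small uniformly over $v\in\mathbb R^d$ is precisely the point that makes the paper's claimed uniformity legitimate.
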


\begin{proof}
We divide the sum in formula \eqref{v} into three parts as follows:
$$
q(x,t)=e^{-t} \sum\limits_{n=1}^\infty \frac{t^n}{n!} a^{*n}(x)
= e^{-t} \Big\{  \sum\limits_{n=1}^{t-t^{3/4}} + \sum\limits_{n=t-t^{3/4}}^{t+t^{3/4}} + \sum\limits_{n=t+t^{3/4}}^\infty \Big\} \frac{t^n}{n!} a^{*n}(x).
$$
With the help of the Stirling formula one can easily check that the first and the last sums here are of order
$O\big(e^{-c \sqrt{t}}\big)$ as $t\to\infty$. Therefore,
\begin{equation}\label{App0}
q(x,t)= e^{-t} \sum\limits_{n=t-t^{3/4}}^{t+t^{3/4}}  \frac{t^n}{n!} a^{*n}(x) + O(e^{-c \sqrt{t}}).
\end{equation}
We need to estimate the quantity $t^{\frac{d \alpha}{2}}  a^{*n}(t^{\frac\alpha2}v)$ with  $n \in \big( s t^\alpha - (s t^\alpha)^{\frac34}, s t^\alpha + (s t^\alpha)^{\frac34} \big) $. Observe that for the function $\Psi(v,s)$ defined by \eqref{Psi}  the following relation holds:
\begin{equation}\label{App1}
\Psi(v,s) = \frac{1}{s^{d/2}} \Psi(\frac{v}{\sqrt{s}},1).
\end{equation}
Then from the uniform in $v$ estimate \eqref{globest} 
we deduce
$$
\frac{(st^\alpha)^{d/2}}{s^{d/2}} a^{*n}\big( (s t^{\alpha})^{1/2} \frac{v}{\sqrt{s}} \big)
= \frac{1}{s^{d/2}} n^{d/2}(1+ o(1)) a^{*n}\big( \sqrt{n} \frac{v}{\sqrt{s}}(1+ o(1)) \big)
 \to
$$
$$
 \to \frac{1}{s^{d/2}} \Psi(\frac{v}{\sqrt{s}},1) = \Psi(v,s);
$$
 here the inequality  $n\geq \frac\delta2 t^\alpha$ has been used.
Thus, for any $v \in \mathbb{R}^d$,
\begin{equation}\label{App2}
\max\limits_{n \in ( s t^\alpha - (s t^\alpha)^{\frac34}, s t^\alpha + (s t^\alpha)^{\frac34} )}\ \   \Big|t^{\frac{d \alpha}{2}}  a^{*n}(t^{\alpha/2} v) - \Psi(v,s)\Big| \to 0, \qquad \mbox{as } \; t \to \infty.
\end{equation}
Moreover, the convergence is uniform with respect to  $s \ge \delta$.
Finally from \eqref{App0} and \eqref{App2} we obtain
$$
t^{\frac{d \alpha}{2}} q(t^{\alpha/2} v, st^\alpha) - \Psi(v,s) = e^{- st^\alpha}\sum\limits_{n=1}^\infty \frac{(st^\alpha)^n}{n!} a^{*n}(t^{\alpha/2}v) \, t^{\frac{d \alpha}{2}} - e^{- st^\alpha}\sum\limits_{n=1}^\infty \frac{(st^\alpha)^n}{n!} \, \Psi(v,s)
$$
\begin{equation}\label{App3}
= e^{-st^\alpha}\sum\limits_{n=st^\alpha-(st^\alpha)^{3/4}}^{st^\alpha + (st^\alpha)^{3/4}} \frac{(st^\alpha)^n}{n!} \Big[ a^{*n}(t^{\alpha/2}v) \, t^{\frac{d \alpha}{2}} - \Psi(v,s) \Big] \ + \ O(e^{-c \sqrt{t}}) \to 0.
\end{equation}
This yields \eqref{est_goodarea}.
\end{proof}

\end{document}